\theoremstyle{plain}
\newtheorem{theorem}{Theorem}[section]
\crefname{theorem}{Theorem}{Theorems}
\newtheorem{proposition}[theorem]{Proposition}
\crefname{proposition}{Proposition}{Propositions}
\newtheorem{corollary}[theorem]{Corollary}
\crefname{corollary}{Corollary}{Corollaries}
\newtheorem{lemma}[theorem]{Lemma}
\crefname{lemma}{Lemma}{Lemmas}
\newtheorem{conjecture}[theorem]{Conjecture}
\crefname{conjecture}{Conjecture}{Conjectures}
\crefname{problem}{Problem}{Problem}
\newtheorem{claim}[theorem]{Claim}
\crefname{claim}{Claim}{Claims}
\crefname{observation}{Observation}{Observations}
\crefname{setup}{Setup}{Setups}
\crefname{fact}{Fact}{Facts}
\crefname{algorithm}{Algorithm}{Algorithms}
\crefname{remark}{Remark}{Remarks}
\crefname{example}{Example}{Examples}
\theoremstyle{definition}
\newtheorem{definition}[theorem]{Definition}
\crefname{definition}{Definition}{Definitions}
\crefname{construction}{Construction}{Constructions}
\newtheorem{question}[theorem]{Question}
\crefname{question}{Question}{Questions}
\numberwithin{equation}{section}
\definecolor{DarkDesaturatedBlue}{HTML}{3A3556}
\definecolor{VividOrange}{HTML}{F15918}
\definecolor{PureOrange}{HTML}{FFBA00}
\definecolor{LightGrayishPink}{HTML}{EEC5D5}
\definecolor{VerySoftBlue}{HTML}{B5AFDB}
\tikzset{snake it/.style={decorate, decoration=snake}}
\definecolor{DarkDesaturatedBlue}{HTML}{3A3556}
\definecolor{VividOrange}{HTML}{F15918}
\definecolor{PureOrange}{HTML}{FFBA00}
\definecolor{LightGrayishPink}{HTML}{EEC5D5}
\definecolor{VerySoftBlue}{HTML}{B5AFDB}
\begin{document}
\title{Spanning trees in the square of pseudorandom graphs}
\date{}
\author{Mat\'ias Pavez-Sign\'e\thanks{Supported by ANID Basal Grant CMM FB210005 and by the European
Research Council (ERC) under the European Union Horizon 2020 research and innovation programme (grant agreement No. 947978) while the author was affiliated to the University of Warwick. Centro de Modelamiento Matem\'atico (CNRS IRL2807), Universidad de Chile, Santiago, Chile. Email: \texttt{mpavez@dim.uchile.cl}}}

\maketitle
\begin{abstract}
  We show that for every $\Delta\in\mathbb N$, there exists a constant $C$ such that if $G$ is an $(n,d,\lambda)$-graph with $d/\lambda\ge C$ and $d$ is large enough, then $G^2$ contains every $n$-vertex tree with maximum degree bounded by $\Delta$. This answers a question of Krivelevich.  \end{abstract}
\section{Introduction}
A pseudorandom graph $G$ on $n$ vertices is a sparse graph that ``resembles'' many of the properties that are typically present in the binomial random graph $G(n,p)$ with edge density $p=e(G)/\binom{n}{2}$. Arguably, the most crucial characteristic of random graphs that pseudorandom graphs try to capture is the \textit{uniform edge distribution} property, that is, that all large subsets of vertices span approximately the expected number of edges that appear in the truly random case. 

In this paper, we will take a widely used approach to pseudorandom graphs based on a \textit{spectral gap} condition. Say that a graph $G$ is an  $(n,d,\lambda)$-graph if $G$ is an $n$-vertex $d$-regular graph such that all of the non-trivial eigenvalues of $G$ are bounded by $\lambda$ in absolute value, in which case, the so-called \textit{expander mixing lemma} implies that $G$ enjoys the uniform edge distribution property. We refer to the excellent  paper by Krivelevich and Sudakov~\cite{krivelevich2006pseudo} for a comprehensive survey about pseudorandom graphs. In this article, we are interested in the following extremal question.
\begin{question}\label{question:main}
     For an $n$-vertex graph $H$, how large $d/\lambda$ must be so that every  $(n,d,\lambda)$-graph contains a copy of $H$. 
\end{question}
One of the most important directions here is when $H$ is a Hamilton cycle, in which case we have the following beautiful conjecture posed by Krivelevich and Sudakov nearly 20 years ago.
\begin{conjecture}[Krivelevich and Sudakov~\cite{KrivelevichHamilton}]\label{conjecture:Hamilton}There exists a positive constant $C$ such that the following holds. If $G$ is an $(n,d,\lambda)$-graph with $d/\lambda\ge C$, then $G$ contains a Hamilton cycle.
\end{conjecture}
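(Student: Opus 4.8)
The plan is to attack the conjecture by the \emph{absorption method}, using the spectral gap only through the expander mixing lemma. The one elementary input is that $d/\lambda\ge C$ already forces strong vertex expansion: for $S\subseteq V$, putting $U=V\setminus(S\cup N(S))$ gives $e(S,U)=0$, hence $|S|\,|U|\le(\lambda n/d)^2$, and a short case analysis upgrades this to $|N(S)\setminus S|\ge 2|S|$ for every $S$ with $|S|\le\tfrac13(1-C^{-2})n$, together with the far stronger $|N(S)\setminus S|\ge\tfrac1{16}(d/\lambda)^2|S|$ once $|S|=O((\lambda/d)^2n)$. So $G$ is a connected $(\Omega(n),2)$-expander with huge expansion on sublinear scales, and by Pósa's rotation--extension technique it already contains a path covering all but $O(n/C^2)$ vertices. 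The entire content of the conjecture is to turn such a near-spanning path into a Hamilton cycle, since the textbook completion step relies on random ``boosters'' unavailable in a fixed graph. The proposal, then: build an \emph{absorbing structure} $P_{\mathrm{abs}}$ in advance, grow the complementary path via rotation--extension while keeping $P_{\mathrm{abs}}$ intact, and absorb the uncovered set to produce the Hamilton cycle directly.

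The expansion and path-growing half is routine. The mixing lemma gives $e(A,B)\ge d|A||B|/n-\lambda\sqrt{|A||B|}>0$ whenever $|A|,|B|=\Omega(n)$, so $G$ has linear connectivity, logarithmic diameter, and short paths avoidable around any prescribed sublinear set; combined with factor-$2$ expansion up to a linear threshold this drives the rotation--extension process, which I would run with the interior of $P_{\mathrm{abs}}$ frozen so that $P_{\mathrm{abs}}$ survives as a subpath, leaving an uncovered set $W$ of size $O(n/C^2)$. Choosing $C$ large enough that $|W|$ lands below the absorbing capacity of $P_{\mathrm{abs}}$ is what closes this half.

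The crux --- and the reason a genuinely new idea is needed for a \emph{constant} $C$ --- is $P_{\mathrm{abs}}$. The obstruction is the sparse regime, which the conjecture permits ($d=o(\sqrt n)$, e.g.\ bounded-degree Ramanujan graphs): there the mixing lemma gives \emph{no} lower bound on the degree of an individual vertex into a fixed sublinear set, so the standard device of reserving a random linear ``reservoir'' in which every vertex has many neighbours collapses --- and the same failure haunts the step of connecting $P_{\mathrm{abs}}$'s ends through the rest of the graph. One is forced to work with absorbing gadgets of bounded \emph{radius}: for a vertex $v$, a gadget could be a pair $a,b$ with $c\in N(v)\cap N(a)$, $c'\in N(v)\cap N(b)$, and $a,b,c,c'$ distinct, so that a segment $a\cdots b$ of the eventual cycle can be rerouted through $a\,c\,v\,c'\,b$; two applications of the mixing lemma show that every $v$ has $\Omega((d/\lambda)^2)$ such gadgets even when $N(v)$ is independent. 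The hard part is then to assemble a \emph{private} gadget for each of the $(1-o(1))n$ vertices into a single absorbing structure $P_{\mathrm{abs}}$ whose deletion still leaves a good expander and which robustly absorbs an arbitrary set of the allowed size --- a template / hypergraph-matching problem on the bipartite incidence graph between vertices and potential gadgets, in which, with no sprinkling available, only the single scale of pseudorandomness is there to make the matching robust. I expect this assembly step to be the real bottleneck: it is precisely where the existing partial results are forced to let $d/\lambda$ grow, and handling it for a fixed constant is what a complete proof of the conjecture must deliver.
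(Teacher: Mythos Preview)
This statement is a \emph{conjecture}, not a theorem: the paper does not prove it, and indeed explicitly records that it remains open in general (the best known bounds being $d/\lambda \ge C(\log n)^{1/3}$, or constant $d/\lambda$ only under the extra hypothesis $d\ge n^{\alpha}$). So there is no ``paper's own proof'' to compare against, and your proposal should be read as a plan of attack on an open problem rather than as a proof.

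Read that way, your outline is honest about its own gap. The first half (expansion from the mixing lemma, P\'osa rotation--extension to cover all but $O(n/C^2)$ vertices) is standard and essentially what underlies the known partial results. But the absorption half is not a proof: you describe what an absorbing structure $P_{\mathrm{abs}}$ would need to do, sketch a local gadget, and then explicitly concede that assembling these gadgets into a single robust absorber, in the sparse regime $d=o(\sqrt n)$ where individual vertices have no guaranteed degree into a sublinear reservoir, is ``the real bottleneck'' that you do not resolve. That concession is accurate --- it is exactly the obstacle that forces the $(\log n)^{1/3}$ factor in the Glock--Munh\'a Correia--Sudakov argument --- and nothing in your proposal gets past it. In particular, the template/hypergraph-matching step you allude to requires, in every known implementation, either randomness to sprinkle or enough degree into the reservoir to run a robust matching argument, and you have neither when $d/\lambda$ is a fixed constant and $d$ is polylogarithmic.

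So: good diagnosis of where the difficulty lies, but what you have written is a research programme, not a proof, and the paper does not claim one either.
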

Krivilevich and Sudakov~\cite{krivelevich2006pseudo} proved that $d/\lambda \ge C\log n^{1-o(1)}$ is enough to guarantee Hamiltonicity in $(n,d,\lambda)$-graphs, and quite recently Glock, Munh\'a Correira, and Sudakov~\cite{glock2023hamilton} improved this result by showing that $d/\lambda\ge C(\log n)^{1/3}$ is sufficient to force Hamiltonicity. Moreover, they showed that Conjecture~\ref{conjecture:Hamilton} is true when $d\ge n^\alpha$ for some fixed $\alpha>0$.

Besides Hamiltonicity, probably the most natural problem here is to study when $(n,d,\lambda)$-graphs contain all $n$-vertex trees with bounded maximum degree. If we believe that Conjecture~\ref{conjecture:Hamilton} is correct, then we should expect to find Hamiltonian paths in $(n,d,\lambda)$-graphs, as long as ${d}/{\lambda}$ is large enough, and, therefore, it is plausible to believe that all bounded degree spanning trees appear as well. Indeed, this was explicitly asked in 2007 by Alon,  Krivelevich, and Sudakov~\cite{alon2007embedding}. 
\begin{question}[\cite{alon2007embedding}]\label{question:alon} Is it true that for any $\Delta\in\mathbb N$, there exists a positive constant $C=C(\Delta)$ such that if $G$ is an $(n,d,\lambda)$-graph with $d/\lambda\ge C$, then $G$ contains a copy of every spanning tree $T$ with $\Delta(T)\le \Delta$.  
\end{question}
As pointed out by Glock, Munh\'a Correira, and Sudakov~\cite{glock2023hamilton}, it is not even known how to find paths of length longer than $n-O(\frac{\lambda n}{d})$ in $(n,d,\lambda)$-graphs when $d/\lambda\ge C$ for some large constant $C$. Therefore, looking for general spanning trees in optimal pseudorandom graphs seems to be a quite challenging problem.  For almost-spanning trees, however, Alon, Krivelevich, and Sudakov~\cite{alon2007embedding} showed that for any $\Delta\in\mathbb N$ and $\varepsilon>0$, there exists a constant $C=C(\varepsilon,\Delta)$ such that if $G$ is an $(n,d,\lambda)$-graph with $d/\lambda\ge C$, then $G$ contains a copy of each tree with maximum degree bounded by $\Delta$ and at most $(1-\varepsilon)n$ vertices. The dependency of the constant $C=C(\varepsilon,\Delta)$ was improved in subsequent works by Balogh, Csaba, Pei, and Samotij~\cite{Balogh2010} and by Montgomery, Pavez-Sign\'e, and Yan~\cite{MPY2023} when $\varepsilon>0$ is sufficiently small. Regarding spanning trees, an unpublished work of Dellamonica Jr~\cite{dellamonicaspanning} shows that $(n,d,\lambda)$-graphs contain a specific type of spanning tree of maximum degree $3$ when $d/\lambda$ is large, and a recent result by Han and Yang~\cite{han2022spanning} establishes that $d\ge 2\lambda\Delta^{5\sqrt{\log n}}$ is enough for an $(n,d,\lambda)$-graph to contain all $n$-vertex trees with maximum degree bounded by $\Delta$.

A new twist to this problem was recently introduced by Krivelevich~\cite{krivelevich2023crowns}, who considered a weakened version of Question~\ref{question:main} by replacing the $(n,d,\lambda)$-graph $G$ with its square\footnote{The square of a graph $G$, denoted $G^2$, is the graph obtained from $G$ by adding edges between every pair of vertices at distance $2$ in $G$.}. Krivelevich~\cite{krivelevich2023crowns} proved that if $G$ is an $(n,d,\lambda)$-graph with $d/\lambda\ge C$ for some large constant $C$, then it contains a spanning subgraph which consists of a linear length cycle together with linearly many non-adjacent leaves attached to it, which he called \textit{crown}, which implies that $G^2$ is Hamiltonian. He also asked whether a  similar result for spanning trees holds or not.
\begin{question}[\cite{krivelevich2023crowns}]\label{problem:krivelevich}Is it true that for every $\Delta\in\mathbb N$, there exists a positive constant $C=C(\Delta)$ such that if $d/\lambda \ge C$ and $T$ is an $n$-vertex tree with $\Delta(T)\le \Delta$, then the square $G^2$ of an $(n,d,\lambda)$-graph $G$ contains a copy of $T$. \end{question}
Our first result is a positive answer to Question~\ref{problem:krivelevich}. 
\begin{theorem}\label{thm:main}For every $\Delta\in\mathbb N$, there exists a positive constant $C$ such that the following holds for every sufficiently large $d\in\mathbb N$. If $G$ is an $(n,d,\lambda)$-graph with $d/\lambda \ge C$, then $G^2$ contains a copy of every $n$-vertex tree with maximum degree at most $\Delta$.   
\end{theorem}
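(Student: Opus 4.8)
The plan is to embed the bulk of $T$ with the almost-spanning embedding theorem of Alon, Krivelevich and Sudakov discussed above, and then to place the remaining vertices by an absorption argument that exploits the extra room afforded by squaring. The basic mechanism is the one behind Krivelevich's crown argument: if $u$ is adjacent to $w$ in $G$ and $w$ is adjacent to $z$ in $G$, then $u$ and $z$ are adjacent in $G^2$, so a reserved vertex $z$ can be inserted next to the intended neighbourhood of $w$ without spending any extra vertices. Concretely, I would fix $\varepsilon=\varepsilon(\Delta)>0$ small, use the almost-spanning result with this fixed $\varepsilon$ (so the constant it produces depends only on $\Delta$, not on how close to spanning we are) to embed a carefully chosen subtree $T'\subseteq T$ with $|V(T')|\le(1-\varepsilon)n$, and then extend the embedding to all of $T$ and all of $V(G)$ using the expansion of $G^2$ together with an absorbing gadget built inside $T$.

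\textbf{Decomposing the tree.} First I would apply the standard leaves-or-bare-paths dichotomy: an $n$-vertex tree $T$ with $\Delta(T)\le\Delta$ has either at least $cn$ leaves or at least $cn$ vertex-disjoint bare paths of some fixed length $k_0$, where $c=c(\Delta)>0$. In either case I extract a core subtree $T'\subseteq T$ with $|V(T')|\le(1-\varepsilon)n$ by deleting $\Theta_\Delta(n)$ vertex-disjoint \emph{pendant units} of bounded size (single leaves in the first case, path interiors in the second), each attached to $T'$ at a single vertex; I would additionally insist that a constant fraction of the attachment points lie in sufficiently flexible positions of $T$, so that a few of them can serve as seeds for absorbing gadgets.

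\textbf{The embedding.} Reserve a uniformly random vertex set $R\subseteq V(G)$ with $|R|$ slightly smaller than the number of deleted vertices, together with a small absorbing set $A\subseteq R$; since a random induced subgraph of an $(n,d,\lambda)$-graph still has the uniform edge-distribution and expansion needed for the almost-spanning result, embed $T'$ into $G\setminus R\subseteq G^2$. The leftover set is then $R$ together with a small, uncontrolled set $R'$ with $|R'|=o_\varepsilon(n)$. In the many-leaves case I must place the deleted leaves bijectively onto $R\cup R'$ so that each lands in the $G^2$-neighbourhood of its parent's image; since $G^2$ inherits very strong expansion from $G$ and $R$ is random, a Hall-type argument provides such a perfect matching into the part contained in $R$, while the absorbing gadgets, and the fact that each leaf has $\Omega(\min(d^2,n))$ candidate images, are used to absorb $R'$ and the few vertices that no parent-image can reach. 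In the many-bare-paths case the analogous task is to join the prescribed pairs of path-endpoint images by vertex-disjoint paths of the prescribed lengths through $R\cup R'$ that cover it exactly; here I would use that $G^2$ admits short paths between any pair avoiding any small set, routable simultaneously for linearly many pairs, together with an absorption inside $R$ to make the cover exact.

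\textbf{Main obstacle.} The hard part is precisely this exact-cover/absorption step. After an almost-spanning embedding one is left with an essentially arbitrary leftover set, and in the regime where $d$ is only a large constant the set $N_{G^2}(\phi(P))$ of vertices reachable from the parent-images need not be all of $V(G)$, so one cannot merely match the leftover to the parents. The resolution has to be an absorbing structure—built from bare paths or spare leaves of $T$ and embedded alongside the bulk—flexible enough to swallow the uncontrolled part of the leftover while leaving behind only vertices that were prepared in advance to be coverable. Making this work with a constant depending only on $\Delta$, and verifying the supporting pseudorandomness facts for the random sets $R,A$ and for $G^2$, is where most of the effort goes; obtaining an exact spanning system of disjoint paths of prescribed lengths and endpoints inside the pseudorandom reservoir in the bare-path case is a second delicate point, handled by a further layer of absorption.
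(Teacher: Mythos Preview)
Your proposal has a genuine gap in the bare-path case. You need a spanning system of vertex-disjoint paths of prescribed lengths joining prescribed endpoint pairs, exactly covering the leftover; you acknowledge this is ``a second delicate point, handled by a further layer of absorption,'' but this is the whole difficulty and you give no mechanism. Montgomery-style absorption for paths needs $d/\lambda$ of order at least $\mathrm{polylog}(n)$; for constant $d/\lambda$ no such technique is known, and the paper's concluding remarks explicitly say this case ``seems to be out of reach at the moment'' for $G$. Working in $G^2$ improves connectivity but does not by itself give exact covers by disjoint paths of prescribed lengths with prescribed endpoints.

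The paper sidesteps this entirely with a one-line trick. If $T$ has few leaves it has linearly many vertex-disjoint bare paths $a_ib_ic_id_i$ of length $3$; replace each by the tree on the same four vertices with edges $a_ib_i,\,b_ic_i,\,b_id_i$, obtaining a tree $\tilde T$ with $\Delta(\tilde T)\le\Delta$ and linearly many leaves (each $c_i$ is now a leaf). By the many-leaves theorem (Theorem~\ref{thm:manyleaves}), $G$ itself contains $\tilde T$. But then $c_i$ and $d_i$ are both $G$-neighbours of the image of $b_i$, hence adjacent in $G^2$, so the embedding of $\tilde T$ in $G$ is simultaneously an embedding of $T$ in $G^2$. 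The bare-path case thus reduces instantly to the many-leaves case, and the only role of squaring is this one local edge swap.

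For the many-leaves case your sketch is in the right spirit but still hides work inside the vague ``absorbing gadgets.'' The paper avoids absorption: via the local lemma it preselects small sets $V_1,V_2,V_3$ into which every vertex has $\Omega(\lambda)$ neighbours, then embeds $T$ minus a matching of leaves in $G-V_1$ using extendability methods while forcing $V_2$ to be covered entirely by images of parents of leaves. This last property is precisely what makes Hall's condition hold on both sides of the final bipartite matching, so no absorption step is needed.
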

A well-known result says that trees contain either a large collection of leaves or many vertex-disjoint induced paths of some fixed length, in compensation. If we are given a tree $T$ with few leaves, and therefore many long induced paths, we can then define a new tree $\tilde T$ which is obtained from $T$ by replacing a single edge from each of those paths in $T$ with a \textit{spike}. This new tree $\tilde{T}$ has the following two main features. Firstly, $\Tilde{T}$ has bounded maximum degree and contains many leaves and, secondly, if $G$ contains a copy of $\Tilde{T}$, then $G^2$ contains a copy of $T$.  Therefore, we will deduce Theorem~\ref{thm:main} from the following result, which might be of independent interest.
\begin{theorem}\label{thm:manyleaves} For every $\Delta\in\mathbb N$ and $\alpha>0$, there exists a positive constant $C$ such that the following holds for every sufficiently large $d\in\mathbb N$. If $G$ is an $(n,d,\lambda)$-graph with $d/\lambda\ge C$, then $G$ contains a copy of every $n$-vertex tree with maximum degree at most $\Delta$ and at least $\alpha n$ leaves.
\end{theorem}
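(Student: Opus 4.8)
The plan is to embed $T$ in two stages: a near-spanning ``core'' subtree first, then the remaining leaves by a matching argument, with essentially all the work going into preparing the core so that the matching step can be completed.

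\emph{Stage 1 (decomposition).} Let $L$ be the set of leaves of $T$ and $T'=T-L$. Since $\Delta(T)\le\Delta$ and $|L|\ge\alpha n$ we have $(\alpha/\Delta)n\le|V(T')|\le(1-\alpha)n$, and the set $P^{*}\subseteq V(T')$ of \emph{leaf-parents} (vertices of $T$ with a leaf neighbour) has $|P^{*}|\ge(\alpha/\Delta)n$; note that every leaf of $T'$ lies in $P^{*}$. Write $\ell(v)$ for the number of leaf-children of $v\in P^{*}$ in $T$, so that $\sum_{v\in P^{*}}\ell(v)=|L|$ and $\ell(v)\le\Delta$. (If convenient one may instead only partially delete the leaves: eagerly reserve and later re-insert a single leaf from each vertex of a fixed $(\alpha/\Delta)n$-subset of $P^{*}$, embedding the remaining leaves together with the core, so that the final step becomes an ordinary perfect matching rather than a weighted one.)

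\emph{Stage 2 (completing from a good core embedding).} Suppose we have embedded $T'$ by some injection $\varphi\colon V(T')\hookrightarrow V(G)$, and set $A=V(G)\setminus\varphi(V(T'))$, so $|A|=|L|\ge\alpha n$. To finish we must injectively assign to each leaf $\ell$ of $T$ a vertex of $A$ lying in $N_{G}(\varphi(\operatorname{par}(\ell)))$. By Hall's theorem this is possible once, for every $Q\subseteq\varphi(P^{*})$, one has $|N_{G}(Q)\cap A|\ge\min\{\Delta|Q|,\,|A|\}$. Using the expander mixing lemma, if $a\in A$ is non-adjacent to $Q$ then $|Q|\cdot|A\setminus N_{G}(Q)|\le(\lambda n/d)^{2}$, hence $|N_{G}(Q)\cap A|\ge|A|-(\lambda n/d)^{2}/|Q|$, which already yields the bound for all $Q$ of intermediate size (using $|A|\ge\alpha n$ and $d/\lambda\ge C$). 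What remains are the two extreme regimes, and these impose genuine conditions on $\varphi$: (a) for small $Q$, each $\varphi(v)$ with $v\in P^{*}$ must have many free neighbours and these neighbourhoods must be suitably spread out; (b) for $Q$ close to $\varphi(P^{*})$, the set $\varphi(P^{*})$ must dominate $A$ \emph{robustly} -- every $a\in A$ should have $\Omega(\lambda)$ neighbours in $\varphi(P^{*})$. In short, $\varphi$ must place $P^{*}$ so that its image behaves, for these purposes, like a pseudorandom set of its size.

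\emph{Stage 3 (the core embedding -- the main obstacle).} The Alon--Krivelevich--Sudakov almost-spanning theorem \cite{alon2007embedding} does give \emph{a} copy of $T'$, since $|V(T')|\le(1-\alpha)n$ and $d/\lambda\ge C\ge C(\alpha,\Delta)$; but it offers no control over $A$, and when $d$ is only a large constant a bounded-degree connected image of linear order need not dominate anything, so its leftover can contain stranded vertices with no leaf-parent neighbour, after which no completion exists. Overcoming this is the heart of the argument. The approach I would take is to fuse the tree-embedding with the dominating-structure technology behind Krivelevich's crown theorem \cite{krivelevich2023crowns}: first identify inside $T'$ a long ``backbone'' (a path, or more generally a subtree) through which a linear number of the leaf-parents can be threaded, reserving slack around it; then run a tree analogue of Krivelevich's argument to embed the backbone as a path/cycle-like structure meeting every $G$-neighbourhood $\Theta(\lambda)$ times, so that the leaf-parents it carries robustly dominate $G$ and each has $\Theta(d)$ unused neighbours; and finally extend $\varphi$ to the rest of $T'$ by applying the almost-spanning theorem to the portion of $G$ left after the backbone, which is still sufficiently pseudorandom. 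Given such a $\varphi$, Stage 2 applies and mapping each leaf to its matched vertex of $A$ produces the required copy of $T$. I expect Stage 3 -- and within it, guaranteeing the robust-domination property (b) while embedding a prescribed tree -- to be by far the most delicate part; Stages 1 and 2 are routine once it is available.
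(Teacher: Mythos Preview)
Your global plan---delete the leaves, embed the core $T'$, then reattach the leaves via Hall---is exactly what the paper does, and your analysis of Hall's condition into a ``small $Q$'' and a ``large $Q$'' regime is also on target. The divergence is entirely in Stage~3, and here your proposal is not yet a proof.

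The paper does \emph{not} route the leaf-parents through a backbone or invoke the crown machinery. Instead it fixes, \emph{before} any embedding, three small ``matchmaker'' sets $V_1,V_2,V_3\subset V(G)$ (found via the Lov\'asz local lemma) such that every vertex of $G$ has $\Omega(\lambda)$ neighbours in each $V_i$. It then embeds $T'$ into $G-V_1$ by the extendability method, using Montgomery's covering lemma iteratively to force $V_2$ to lie entirely inside the image of a well-separated set of leaf-parents. Your condition~(b) is then immediate: every leftover vertex already has $\Omega(\lambda)$ neighbours in $V_2\subseteq\varphi(P^{*})$. Condition~(a) comes for free from $(D,m)$-extendability of the final embedding. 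So the robust-domination requirement is reduced to a \emph{covering} requirement (hit the fixed small set $V_2$ with leaf-parents), which the extendability toolkit is built to handle.

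Your backbone plan, by contrast, runs into real obstacles. First, trees with $\alpha n$ leaves can be extremely bushy (think of a complete $\Delta$-ary tree), so there need be no long path through many leaf-parents; once you retreat to ``or more generally a subtree'', the crown argument---which constructs a cycle with pendants, not a prescribed tree---no longer applies. Second, Krivelevich's method \emph{finds} a structure meeting every neighbourhood often; making a \emph{given} tree do so is essentially the problem you are trying to solve. Third, ``apply the almost-spanning theorem to the portion of $G$ left after the backbone'' does not work as stated: after deleting a linear-size set the remainder is no longer an $(n',d',\lambda')$-graph, and in any case that theorem gives no control over which vertex of the remaining forest lands where, so you cannot glue the pieces back onto the backbone at the prescribed attachment vertices. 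The matchmaker\,+\,extendability route avoids all three issues simultaneously.
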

The paper is organised as follows. In Section~\ref{section:overview}, we give an overview of the proof of Theorem~\ref{thm:manyleaves}. In Section~\ref{section:preliminaries} we introduce the main tools that we need here, and we prove Theorems~\ref{thm:main} and~\ref{thm:manyleaves} in Section~\ref{section:proofs}. Lastly, we give some concluding remarks in Section~\ref{section:conclusion}. 
\section{Outline of the proof of Theorem~\ref{thm:manyleaves}}\label{section:overview}
Suppose we are given an $(n,d,\lambda)$-graph $G$ and a tree $T$ which contains a set of leaves $L$ of size $|L|\ge \alpha n$. In order to embed $T$, we will follow a similar approach as it has been done before for trees with many leaves (see~\cite{Pedro,han2022spanning,krivelevich2010embedding,montgomery2019spanning} for instance). Roughly speaking, the idea is to first embed $T-L$ and then find a matching between the image of the parents of $L$ and the unoccupied vertices in $G$. 

Assume for a moment that we can actually embed $T-L$ and let us discuss how to complete the embedding of $T$. When the host graph is a truly random graph, this can be easily done by just \textit{sprinkling} a few more edges and then showing that with high probability there exists a matching between the set of parents and the rest of the uncovered vertices in the graph. However, when working with pseudorandom graphs, this strategy is not possible anymore. To overcome this issue, we will use the idea of \textit{matchmakers} as introduced by Montgomery~\cite{montgomery2019spanning} and recently implemented by Krivelevich~\cite{krivelevich2023crowns}.

We first pick pairwise disjoint small random subsets $V_1,V_2,V_3\subset V(G)$, called matchmakers, and show that with positive probability every vertex in $V(G)$ has $\Omega(\lambda )$ neighbours in each of the $V_i$'s (this is done by using the {Lov\'asz's local lemma}). This property will then imply that small sets of vertices expand into each of the $V_i$'s (see Lemma~\ref{lemma:expansion}). We will use each of these sets $V_1,V_2$ and $V_3$ for different purposes. Firstly, we use $V_3$ to show that even after removing $V_1$ from $G$, we still have good expansion properties. Secondly, we prove that if we embed $T-L$ outside $V_1$, then we can use the properties of $V_1$ to show that the image of the set of parents of $L$ will expand into the set of unoccupied vertices in $G$. Lastly, we will use $V_2$ to show that the set of unused vertices in $G$ also have good expansion properties in the image of the parents of $L$. To perform this last step, however, we need to embed $T-L$ while ensuring that the image of the parents of $L$ covers every vertex from $V_2$. We will explain now how to do this.  

The main tool that we use to embed trees is a powerful embedding technique, sometimes called \textit{extendability methods} or \textit{tree embeddings with rollbacks}, which was first introduced by Friedman and Pippenger~\cite{FP1987} in 1987 and subsequently improved by Haxell~\cite{H2001} in 2001. Here we will use a modern reformulation of this technique which is attributed to Glebov, Johannsen, and Krivelevich~\cite{Glebov2013}, and that has played a major role in the solution of several problems in the last few years (see~\cite{Pedro, araujo2022ramsey,draganic2022rolling,glock2023hamilton,han2022spanning, krivelevich2023turan,krivelevich2023crowns, montgomery2019spanning, MPY2023} for instance).  Roughly speaking, the extendability method (Lemma~\ref{lemma:adding:leaf}) says that if we are given a subgraph $S_i\subset G$ which is 'extendable' and $G$ has good expansion properties, then we can extend $S_i$ by adding an edge $e_i$ with one of its endpoints in $V(S_i)$ and other in $V(G)\setminus V(S_i)$ so that $S_i+e_i$ remains extendable. In $(n,d,\lambda)$-graphs, this method works smoothly as long as $|S_i|\le |G|-\Theta(\frac{\Delta\lambda n}d)$, and therefore, since $|L|\ge \alpha n\gg \frac{\lambda n}{d}$, we will be able to iterate this process until we embed all of $T-L$. The main issue here, however, is that we need to ensure that the matchmaker $V_2$ is completely contained in the image of the parents of $L$.

In order to cover $V_2$, we will use some further ideas from the work of Montgomery~\cite{montgomery2019spanning}. We first take a large set $Q$ of parents of leaves which are far apart from each other in the tree (this is possible as $T$ has bounded degree). Using extendability methods, we will embed $T'$ in rounds so that at each round we cover more and more of $V_i$ using only vertices from $Q$ at each step. After this is completed, every vertex from $V_2$ will be covered by vertices from $Q$ and then we just finish the embedding of $T-L$ using extendability methods. To complete the embedding of $T$, we use Hall's theorem to find a matching between the image of the parents of leaves and the leftover vertices in the graph. The properties of the matchmakers will guarantee that Hall's matching criteria is satisfied and thus we can complete the embedding of $T$. 
\section{Preliminaries}\label{section:preliminaries}
We will use standard graph theory notation. For a graph $G$, we denote by $V(G)$ and $E(G)$ the set of vertices and edges of $G$, respectively, and write $|G|=|V(G)|$ and $e(G)=|E(G)|$. For a vertex $v\in V(G)$, we denote by $N(v)$ the set of neighbours of $v$ and let $d(v)=|N(v)|$ denote the degree of $v$. Given a subset $S\subset V(G)$, the set of neighbours of $S$ is $\Gamma(S)=\bigcup_{s\in S}N(s)$ and the external neighbourhood of $S$ is $N(S)=\Gamma(S)\setminus S$. For a vertex $v\in V(G)$ and sets $U,S\subset V(G)$, we write $d(v,U)=|N(v)\cap U|$ and $N(S,U)=N(S)\cap U$. When working with more than one graph, we will use a subscript to specify which graph are we working with. For example, if $H$ is a subgraph of $G$ and $v\in V(H)$, then $d_H(v)$ denotes the degree of $v$ in $H$. Given a subset $S\subset V(G)$, we write $G[S]$ to denote the graph \textit{induced} by $S$, that is, the graph with vertex set $S$ and all the edges from $G$ with both endpoints in $S$, and we write $G-S$ for the graph $G[V(G)\setminus S]$. For two sets $A,B\subset V(G)$, we let $e(A,B)$ denote the number of edges with one endpoint in $A$ and the other endpoint in $B$, and, if $A$ and $B$ are disjoint, we let $G[A,B]$ denote the bipartite graph induced by $A$ and $B$ in $G$, in which case $e(A,B)$ is just the number of edges in $G[A,B]$. For a graph $H$ and an edge $e\not\in E(H)$, we let $H+e$ denote the graph obtained from $H$ by adding the edge $e$. 

For $n\in\mathbb N$, we write $[n]=\{1,\dots,n\}$. We will use the standard hierarchy notation, that is, for real numbers $a,b\in (0,1]$, we will write $a\ll b$ to mean that given $b$ fixed, there exists $a_0>0$ such that if $a\le a_0$ then all the subsequent relevant statements hold. If $1/x$ appears in such a hierarchy, we will assume that $x$ is a natural number, and hierarchies with more constants are defined in a similar way and are to be read from right to left. 
\subsection{Probabilistic tools}
We will use the following standard probabilistic results (see \cite[Corollary 2.3]{JLR2000} and \cite[Corollary 5.1.2]{alon2016probabilistic}).

\begin{lemma}[Chernoff's bound]\label{lemma:chernoff}Let $X$ be binomial random variable. Then, for all $0<\varepsilon\le \frac{3}{2}$, 
\[\mathbb P(\big|X-\mathbb E[X]\big|\ge \varepsilon \mathbb E[X])\le 2\exp \left(-\frac{\varepsilon^2}{3}\mathbb E[X]\right).\]
\end{lemma}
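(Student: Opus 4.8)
The plan is to prove \Cref{lemma:chernoff} by the classical exponential–moment (Bernstein) method, treating the upper and lower tails separately and then combining them with a union bound. Write $X=X_1+\dots+X_n$ as a sum of independent Bernoulli random variables, each with success probability $p$, and set $\mu=\mathbb E[X]=np$; if $\mu=0$ the claimed inequality is vacuous since $\varepsilon>0$, so assume $\mu>0$. The one computation underlying everything is the moment generating function estimate: for every real $t$,
\[
\mathbb E[e^{tX}]=\prod_{i=1}^n\mathbb E[e^{tX_i}]=\bigl(1+p(e^t-1)\bigr)^n\le \exp\bigl(np(e^t-1)\bigr)=\exp\bigl(\mu(e^t-1)\bigr),
\]
where the inequality uses $1+x\le e^x$.

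For the upper tail, I would apply Markov's inequality to $e^{tX}$ with $t\ge 0$:
\[
\mathbb P\bigl(X\ge(1+\varepsilon)\mu\bigr)\le e^{-t(1+\varepsilon)\mu}\,\mathbb E[e^{tX}]\le \exp\bigl(\mu(e^t-1)-t(1+\varepsilon)\mu\bigr),
\]
and then optimise by choosing $t=\ln(1+\varepsilon)>0$, which gives
\[
\mathbb P\bigl(X\ge(1+\varepsilon)\mu\bigr)\le \exp\bigl(-\mu\bigl((1+\varepsilon)\ln(1+\varepsilon)-\varepsilon\bigr)\bigr).
\]
It then remains to verify the scalar inequality $(1+\varepsilon)\ln(1+\varepsilon)-\varepsilon\ge \varepsilon^2/3$ for $0<\varepsilon\le 3/2$. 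This is a one–variable calculus exercise: letting $h(\varepsilon)$ denote the difference of the two sides, one checks $h(0)=0$, $h'(0)=0$, and (via $h''(\varepsilon)=\tfrac1{1+\varepsilon}-\tfrac23$) that $h$ first increases and then decreases on $[0,3/2]$, while $h(3/2)\ge 0$; hence $h\ge 0$ on the whole interval. This is the only genuinely non-mechanical step, and it is exactly where the hypothesis $\varepsilon\le 3/2$ is essential, since the inequality fails for larger $\varepsilon$. We conclude $\mathbb P\bigl(X\ge(1+\varepsilon)\mu\bigr)\le \exp(-\varepsilon^2\mu/3)$.

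For the lower tail the argument is symmetric and in fact gives a better exponent. Applying Markov's inequality to $e^{-tX}$ with $t\ge0$ and the same moment generating function bound yields $\mathbb P\bigl(X\le(1-\varepsilon)\mu\bigr)\le \exp\bigl(\mu(e^{-t}-1)+t(1-\varepsilon)\mu\bigr)$; when $0<\varepsilon<1$ one optimises with $t=-\ln(1-\varepsilon)>0$ and is left with the elementary bound $(1-\varepsilon)\ln(1-\varepsilon)+\varepsilon\ge \varepsilon^2/2\ge \varepsilon^2/3$, valid for all $0<\varepsilon<1$, so $\mathbb P\bigl(X\le(1-\varepsilon)\mu\bigr)\le \exp(-\varepsilon^2\mu/3)$. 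When $\varepsilon\ge 1$ the event $\{X\le(1-\varepsilon)\mu\}$ is either empty (if $\varepsilon>1$) or equals $\{X=0\}$ (if $\varepsilon=1$), and $\mathbb P(X=0)=(1-p)^n\le e^{-\mu}\le e^{-\varepsilon^2\mu/3}$, so the same bound holds in all cases. Finally, a union bound gives
\[
\mathbb P\bigl(|X-\mathbb E[X]|\ge\varepsilon\mathbb E[X]\bigr)\le \mathbb P\bigl(X\ge(1+\varepsilon)\mu\bigr)+\mathbb P\bigl(X\le(1-\varepsilon)\mu\bigr)\le 2\exp\bigl(-\varepsilon^2\mu/3\bigr),
\]
which is the assertion of \Cref{lemma:chernoff}. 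The main obstacle, such as it is, is purely the pair of scalar inequalities above; the probabilistic content is the routine Bernstein computation.
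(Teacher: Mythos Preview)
Your proof is correct and is the standard Bernstein/Cram\'er--Chernoff argument; the moment generating function bound, the two optimisations, and the scalar inequalities are all carried out properly (in particular your check that $h(3/2)\ge 0$, together with the unimodality of $h$, cleanly pins down where the hypothesis $\varepsilon\le 3/2$ enters). The paper itself gives no proof of \Cref{lemma:chernoff} at all: it simply quotes the result as standard, with a citation to \cite{JLR2000}. So there is nothing to compare against beyond noting that your argument is exactly the textbook derivation those references contain.
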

\begin{lemma}[Lov\'asz's local lemma]\label{lemma:local}Let $A_1,\ldots, A_n$ be events in a probability space. Suppose that each event $A_i$ is independent of all the other events $A_j$ but at most $d$. If $\mathbb P(A_i)\le p$ for all $i\in [n]$ and $ep(d+1)\le 1$, then $\mathbb P(\bigwedge_{i=1}^n\overline{A_i})>0$.
    
\end{lemma}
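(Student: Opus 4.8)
The plan is to prove the standard general (asymmetric) form of the local lemma and then specialise. For each $i\in[n]$ fix a set $\Gamma(i)\subseteq[n]\setminus\{i\}$ with $|\Gamma(i)|\le d$ such that $A_i$ is mutually independent of the family $\{A_k: k\notin\Gamma(i)\cup\{i\}\}$. I would first prove the general statement: if there exist reals $x_1,\dots,x_n\in[0,1)$ such that $\mathbb P(A_i)\le x_i\prod_{j\in\Gamma(i)}(1-x_j)$ for every $i$, then $\mathbb P\bigl(\bigwedge_{i=1}^n\overline{A_i}\bigr)\ge\prod_{i=1}^n(1-x_i)>0$.

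The heart of the argument is the claim that $\mathbb P\bigl(A_i\mid\bigwedge_{j\in S}\overline{A_j}\bigr)\le x_i$ for every $i$ and every $S\subseteq[n]\setminus\{i\}$, proved by induction on $|S|$; the base case $S=\emptyset$ is immediate since each factor $1-x_j\le1$. For the inductive step, split $S=S_1\cup S_2$ with $S_1=S\cap\Gamma(i)$ and $S_2=S\setminus\Gamma(i)$ and write
\[\mathbb P\Bigl(A_i\,\Big|\,\bigwedge_{j\in S}\overline{A_j}\Bigr)=\frac{\mathbb P\bigl(A_i\wedge\bigwedge_{j\in S_1}\overline{A_j}\;\big|\;\bigwedge_{k\in S_2}\overline{A_k}\bigr)}{\mathbb P\bigl(\bigwedge_{j\in S_1}\overline{A_j}\;\big|\;\bigwedge_{k\in S_2}\overline{A_k}\bigr)}.\]
Bound the numerator from above by discarding the conjunction over $S_1$ and using that $A_i$ is independent of $\{A_k:k\in S_2\}$, which gives $\mathbb P(A_i)\le x_i\prod_{j\in\Gamma(i)}(1-x_j)$. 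Bound the denominator from below by enumerating $S_1=\{j_1,\dots,j_r\}$, telescoping it as $\prod_{\ell=1}^{r}\mathbb P\bigl(\overline{A_{j_\ell}}\mid\bigwedge_{m<\ell}\overline{A_{j_m}}\wedge\bigwedge_{k\in S_2}\overline{A_k}\bigr)$, and invoking the inductive hypothesis on each factor (each conditioning set has fewer than $|S|$ indices) to get $\prod_{\ell=1}^{r}(1-x_{j_\ell})$; in particular the denominator is positive, so the ratio makes sense. Since $S_1\subseteq\Gamma(i)$, dividing the two bounds leaves at most $x_i$, completing the induction.

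With the claim in hand, the chain rule gives $\mathbb P\bigl(\bigwedge_{i=1}^n\overline{A_i}\bigr)=\prod_{i=1}^{n}\bigl(1-\mathbb P(A_i\mid\bigwedge_{j<i}\overline{A_j})\bigr)\ge\prod_{i=1}^{n}(1-x_i)>0$, which is the general form. To obtain the statement as worded, apply this with $x_i=\tfrac1{d+1}$ for all $i$: then $x_i\prod_{j\in\Gamma(i)}(1-x_j)\ge\tfrac1{d+1}\bigl(1-\tfrac1{d+1}\bigr)^{d}>\tfrac1{e(d+1)}\ge p\ge\mathbb P(A_i)$, where the strict inequality is the elementary estimate $(1-\tfrac1{d+1})^{d}>e^{-1}$ and the last two inequalities are the hypotheses $ep(d+1)\le1$ and $\mathbb P(A_i)\le p$. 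Hence $\mathbb P\bigl(\bigwedge_{i=1}^n\overline{A_i}\bigr)\ge(1-\tfrac1{d+1})^{n}>0$, as required. I expect the only genuine obstacle to be the inductive claim — specifically the split $S=S_1\cup S_2$ and the telescoping lower bound on the denominator, together with the bookkeeping of which conditioning sets shrink; everything else is routine.
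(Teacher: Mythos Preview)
Your argument is the standard proof of the local lemma and is correct. However, the paper does not actually prove this statement: it is quoted as a standard tool with a reference to \cite[Corollary~5.1.2]{alon2016probabilistic}, so there is no ``paper's own proof'' to compare against. One tiny caveat in your specialisation: taking $x_i=\tfrac{1}{d+1}$ requires $d\ge 1$ to have $x_i\in[0,1)$; the degenerate case $d=0$ (full mutual independence) is trivial and can be handled directly.
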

\subsection{Dividing trees}

Given a tree $T$, say that two subtrees $S_1,S_2\subset T$ divide $T$ if $S_1$ and $S_2$  share exactly one vertex and $T=S_1\cup S_2$.
\begin{lemma}[{\cite[Proposition 3.19]{montgomery2019spanning}}]\label{lemma:divide:trees} Let $T$ be a tree and let $Q\subset V(T)$ be a fixed subset. Then, there exists subtrees $S_1$ and $S_2$ that divide $T$ and $|V(S_1)\cap Q|,|V(S_2)\cap Q|\ge |Q|/3$.
\end{lemma}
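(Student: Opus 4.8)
The plan is to prove the lemma by a weighted--centroid argument. Assign weight $1$ to each vertex of $Q$ and weight $0$ to every other vertex, and for a vertex set or subtree $X$ write $w(X)=|X\cap Q|$, so $w(T)=|Q|$. I want to find a single vertex $v$ whose deletion breaks $T$ into two ``sides'', each side being the union of some of the components of $T-v$ together with $v$ itself, so that both sides carry weight at least $|Q|/3$; handing $v$ to both sides as their unique common vertex (and noting that the two sides together cover all of $T$) then produces subtrees $S_1,S_2$ that divide $T$ as required. The degenerate cases are immediate: if $|Q|=1$ I split at the unique vertex of $Q$, which then lies in both $S_i$, and if $|Q|=0$ any split works; similarly $|V(T)|\le 1$ is trivial. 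So I will assume $|V(T)|\ge 2$ and $|Q|\ge 2$.

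To locate $v$, root $T$ at an arbitrary vertex $r$ and let $q(x)$ be the weight of the subtree $T_x$ hanging below $x$, so $q(r)=|Q|$. Starting from $r$, repeatedly move to a child maximising $q$, stopping at the first vertex $v$ with $q(v)\le\tfrac23|Q|$. Since $q$ is non-increasing along this descent and any leaf has $q$-value at most $1<\tfrac23|Q|$, the descent terminates; and since $q(r)=|Q|>\tfrac23|Q|$ we take at least one step, so $v\neq r$ and $v$ has a parent $p$ with $q(p)>\tfrac23|Q|$ and with $v$ the child of $p$ of largest $q$-value.

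Now I split into two cases according to the size of $q(v)$. If $q(v)\ge\tfrac13|Q|$, take $S_1=T_v$ and $S_2=T\bigl[(V(T)\setminus V(T_v))\cup\{v\}\bigr]$; these are connected subtrees sharing only $v$ with union $T$, and $w(S_1)=q(v)\ge\tfrac13|Q|$ while $w(S_2)=|Q|-q(v)+[v\in Q]\ge|Q|-\tfrac23|Q|=\tfrac13|Q|$. If instead $q(v)<\tfrac13|Q|$, then every child of $p$ has $q$-value at most $q(v)<\tfrac13|Q|$, and the remaining ``upward'' component of $T-p$ has weight $|Q|-q(p)<\tfrac13|Q|$, so every component of $T-p$ has weight strictly less than $\tfrac13|Q|$. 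Listing these components in any order and greedily adding them to a bucket $A$ until $w(A)$ first reaches $\tfrac13|Q|$ (possible, since the components have total weight $|Q|-[p\in Q]\ge\tfrac13|Q|$), the last component added has weight $<\tfrac13|Q|$, so $w(A)<\tfrac23|Q|$, whence the complementary bucket $B$ has weight $|Q|-[p\in Q]-w(A)>\tfrac13|Q|-[p\in Q]$. Taking $S_1=T\bigl[\{p\}\cup\bigcup_{C\in A}V(C)\bigr]$ and $S_2=T\bigl[\{p\}\cup\bigcup_{C\in B}V(C)\bigr]$, both contain $p$, they share only $p$, their union is $T$, and $w(S_1)=w(A)+[p\in Q]\ge\tfrac13|Q|$, $w(S_2)=w(B)+[p\in Q]>\tfrac13|Q|$.

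I do not expect a serious obstacle: this is the standard weighted--centroid descent followed by a one-line greedy balancing of the resulting components. The only points needing care are checking that $S_1,S_2$ are genuinely connected subtrees dividing $T$ (removing a rooted subtree from a tree leaves a tree, and each component of $T-p$ is adjacent to $p$), making sure the descent lands in the target window $[\tfrac13|Q|,\tfrac23|Q|]$ rather than overshooting — which is precisely why the second case, where a single child of $p$ already carries less than $\tfrac13|Q|$, is handled at $p$ rather than at $v$ — and tracking the indicator terms $[v\in Q]$ and $[p\in Q]$, which only help since the shared vertex is counted on both sides. Verifying the small cases $|Q|\le1$ and $|V(T)|\le1$ is routine.
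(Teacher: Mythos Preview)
Your proof is correct: the weighted-centroid descent followed by greedy balancing of the components of $T-p$ is the standard argument for this kind of statement, and you have handled the boundary accounting (the indicator terms for the shared vertex) and the degenerate cases properly. Note that the paper does not actually give its own proof of this lemma---it is quoted without proof from \cite[Proposition~3.19]{montgomery2019spanning}---so there is nothing in the present paper to compare against; your argument is essentially the expected one.
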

For a tree $T$, say that a subset $X\subset V(T)$ is $k$-separated if every pair of vertices from $X$ are at distance at least $k$ in $T$. The following result says that large subsets of bounded degree trees contain a large separated subset. 
\begin{lemma}[{\cite[Corollary 3.16]{montgomery2019spanning}}]\label{lemma:separated}Let $\Delta\in\mathbb N$ and $k\ge 0$. Let $T$ be a tree with $\Delta(T)\le\Delta$ which contains a subset $X\subset V(T)$ of size $|X|\ge 3\Delta^k$. Then, there exists a subset $Q\subset X$ which is $(2k+2)$-separated in $T$ and $|Q|\ge |X|/(8k+8)\Delta^k$.
\end{lemma}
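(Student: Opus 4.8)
The plan is to prove the statement by a two-stage pruning of $X$. Fix a root $r$ of $T$ and write $h(v)=\mathrm{dist}_T(r,v)$ for the depth of a vertex $v$. First, a pigeonhole step: partition $X$ according to the residue of the depth modulo $2k+2$, i.e.\ into the classes $X_i=\{x\in X:h(x)\equiv i\pmod{2k+2}\}$ for $0\le i\le 2k+1$, and pick a class $X_j$ with $|X_j|\ge |X|/(2k+2)$. I will extract the separated set $Q$ from inside this single class $X_j$. The point of restricting to one residue class is that it rigidifies short distances: if $u,v\in X_j$ have $\mathrm{dist}_T(u,v)<2k+2$ and $w$ denotes their lowest common ancestor, then setting $a=h(u)-h(w)$ and $b=h(v)-h(w)$ we get $a+b=\mathrm{dist}_T(u,v)\le 2k+1$ while $a-b=h(u)-h(v)\equiv 0\pmod{2k+2}$; since $|a-b|\le a+b<2k+2$ this forces $a=b$. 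Hence any two vertices of $X_j$ within distance $2k+2$ of each other lie at exactly the same depth, and their common ancestor is at distance $a\le k$ above both; the degenerate cases in which one of $u,v$ is an ancestor of the other are handled separately and force $u=v$.

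Second, a greedy step: order $X_j$ by non-increasing depth, breaking ties arbitrarily, scan through this order, and add a vertex $x$ to $Q$ precisely when $\mathrm{dist}_T(x,q)\ge 2k+2$ for every $q$ already placed in $Q$. By construction $Q$ is $(2k+2)$-separated, so it remains only to bound $|Q|$ from below. Every $v\in X_j\setminus Q$ was rejected because of some earlier-scanned $q\in Q$ with $\mathrm{dist}_T(q,v)<2k+2$; charge $v$ to the first such $q$. Since $q$ precedes $v$ in the scan we have $h(v)\le h(q)$, so by the rigidity observation $h(v)=h(q)$ and $v$ is a descendant, at distance exactly $a\in\{1,\dots,k\}$, of the ancestor $w_a$ of $q$ at distance $a$ above $q$. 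Since $w_a$ has at most $\Delta(\Delta-1)^{a-1}$ descendants at distance exactly $a$, the number of vertices of $X_j$ charged to a fixed $q$, including $q$ itself, is at most $1+\sum_{a=1}^{k}\Delta(\Delta-1)^{a-1}\le 2\Delta^k+1\le 4\Delta^k$; the case $\Delta\le 1$ is vacuous, since then $|V(T)|\le 2<3\Delta^k$. Therefore $|X_j|\le 4\Delta^k|Q|$, and combining with the pigeonhole bound yields $|Q|\ge |X_j|/(4\Delta^k)\ge |X|/((8k+8)\Delta^k)$, as required.

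The one place where care is needed --- and where the naive approach fails --- is the count in the greedy step. If one simply declares that each vertex added to $Q$ ``kills'' every vertex in the ball of radius $2k+1$ around it, such a ball may contain up to roughly $\Delta^{2k+1}$ vertices of $X$, which is weaker than the target by a factor of about $\Delta^k$. The residue-class restriction is exactly what repairs this: it forces the vertices of $X_j$ eliminated by a single greedy choice to lie at one common depth, hanging off a common ancestor within distance $k$, so that only a ball of radius $k$'s worth of them --- about $\Delta^k$ vertices --- can be involved. Everything else is routine bookkeeping; the only subtleties are the tie-breaking among equal-depth vertices, the degenerate ancestor cases in the rigidity argument, and the tiny cases $k=0$ and $\Delta\le 1$.
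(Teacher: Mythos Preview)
The paper does not prove this lemma; it quotes it verbatim from Montgomery~\cite{montgomery2019spanning} (Corollary~3.16) without argument, so there is no in-paper proof to compare against. Your proof is correct and self-contained.

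A couple of minor remarks. First, the ordering by non-increasing depth in the greedy step is actually unnecessary once you have the residue-class rigidity: if $v\in X_j$ is rejected because of some $q\in Q$ with $\mathrm{dist}_T(v,q)<2k+2$, the rigidity argument already forces $h(v)=h(q)$ irrespective of which was scanned first, so any scanning order works. Second, your bound $1+\sum_{a=1}^{k}\Delta(\Delta-1)^{a-1}\le 2\Delta^k+1$ follows cleanly from $\Delta(\Delta-1)^{a-1}\le\Delta^{a}$ and $\sum_{a=1}^{k}\Delta^{a}=\Delta\frac{\Delta^{k}-1}{\Delta-1}\le 2\Delta^{k}$ for $\Delta\ge 2$; this is worth spelling out. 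With those points noted, the argument is complete: the residue-class trick is precisely what tightens the naive $\Delta^{2k+1}$-ball count down to $O(\Delta^k)$, and the rest is straightforward.
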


\subsection{Expansion properties of pseudorandom graphs}
In this section, we collect properties of $(n,d,\lambda)$-graphs that we will use throughout the paper. The main ingredient that we use is the well-known expander mixing lemma (see \cite[Theorem 2.11]{krivelevich2006pseudo} for a proof).
\begin{lemma}[Expander Mixing Lemma]\label{lemma:mixing}Let $G$ be an $(n,d,\lambda)$-graph. Then, for every pair of (not necessarily disjoint) sets $A,B\subset V(G)$, we have 
\[\left|e(A,B)-\tfrac{d}{n}|A||B|\right|<\lambda\sqrt{|A||B|}.\]
\end{lemma}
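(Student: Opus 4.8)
The plan is to prove this by the standard spectral argument. Let $M$ be the adjacency matrix of $G$. Since $G$ is $d$-regular, the all-ones vector $\mathbf 1$ is an eigenvector of $M$ with eigenvalue $d$, so I can fix an orthonormal eigenbasis $v_1,\dots,v_n$ of $M$ with $v_1=\tfrac1{\sqrt n}\mathbf 1$ and corresponding eigenvalues $d=\mu_1,\mu_2,\dots,\mu_n$, where $|\mu_i|\le\lambda$ for every $i\ge 2$ (these being the non-trivial eigenvalues). The first point to record is that, writing $\mathbf 1_A,\mathbf 1_B\in\{0,1\}^n$ for the indicator vectors of $A$ and $B$, the quantity in question is a bilinear form in $M$:
\[
e(A,B)=\mathbf 1_A^{\top}M\,\mathbf 1_B,
\]
with the convention that an edge with both endpoints in $A\cap B$ is counted twice (this is the reading of ``edges with one endpoint in $A$ and one in $B$'' that makes the identity exact, and it is the one the mixing lemma uses).

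Next I would decompose $\mathbf 1_A=\sum_i\alpha_iv_i$ and $\mathbf 1_B=\sum_i\beta_iv_i$ in this eigenbasis, noting $\alpha_1=\langle\mathbf 1_A,v_1\rangle=\tfrac{|A|}{\sqrt n}$ and $\beta_1=\tfrac{|B|}{\sqrt n}$, and $\sum_i\alpha_i^2=\|\mathbf 1_A\|^2=|A|$, $\sum_i\beta_i^2=|B|$, by orthonormality. Then
\[
e(A,B)=\sum_{i=1}^n\mu_i\alpha_i\beta_i=\mu_1\alpha_1\beta_1+\sum_{i=2}^n\mu_i\alpha_i\beta_i=\frac dn|A||B|+\sum_{i=2}^n\mu_i\alpha_i\beta_i,
\]
and therefore, using $|\mu_i|\le\lambda$ for $i\ge 2$ followed by the Cauchy--Schwarz inequality,
\begin{align*}
\Bigl|e(A,B)-\tfrac dn|A||B|\Bigr|
&=\Bigl|\sum_{i=2}^n\mu_i\alpha_i\beta_i\Bigr|
\le\lambda\sum_{i=2}^n|\alpha_i||\beta_i|\\
&\le\lambda\Bigl(\sum_{i=2}^n\alpha_i^2\Bigr)^{1/2}\Bigl(\sum_{i=2}^n\beta_i^2\Bigr)^{1/2}
\le\lambda\,\|\mathbf 1_A\|\,\|\mathbf 1_B\|=\lambda\sqrt{|A||B|}.
\end{align*}
The last inequality, which simply drops the nonnegative $i=1$ contributions, is strict whenever $A$ and $B$ are both nonempty (since then $\alpha_1,\beta_1\ne0$), giving the strict bound claimed.

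I do not expect any genuine obstacle: the entire content is that on $\mathbf 1^{\perp}$ the operator norm of $M$ is at most $\lambda$, so the ``off-diagonal'' part of the bilinear form is controlled. The only things needing a modicum of care are the counting convention for $e(A,B)$ when $A\cap B\ne\emptyset$ --- which is exactly why one works with the matrix identity above rather than a purely combinatorial definition --- and the tracking of strictness in the final step; both are dealt with above. (This is the classical argument; see \cite[Theorem 2.11]{krivelevich2006pseudo}.)
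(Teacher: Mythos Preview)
Your argument is the standard spectral proof and is correct; the paper does not actually prove this lemma but simply cites \cite[Theorem~2.11]{krivelevich2006pseudo}, which contains exactly the argument you give. (The only quibble is that the strict inequality, as stated, fails when $A$ or $B$ is empty, but this is a harmless edge case in the lemma's formulation rather than a defect in your proof.)
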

\begin{definition}Say that a graph $G$ is $m$-joined if for every pair of disjoint subsets $A,B\subset V(G)$, each of size $m$, there exists at least one edge between them.\end{definition}
A direct consequence of the expander mixing lemma is that  $(n,d,\lambda)$-graphs are $\frac{\lambda n}{d}$-joined.
\begin{corollary}\label{corollary:joined}If $G$ is an $(n,d,\lambda)$-graph, then $G$ is $\frac{\lambda n}d$-joined.    
\end{corollary}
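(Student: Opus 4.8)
The plan is to obtain this directly from the Expander Mixing Lemma (Lemma~\ref{lemma:mixing}), which already packages exactly the uniform edge distribution we need. Set $m=\lceil \lambda n/d\rceil$ and let $A,B\subseteq V(G)$ be disjoint with $|A|=|B|=m$; it is enough to treat sets of this exact size, since any pair of disjoint sets of larger size contains a pair of disjoint $m$-subsets, and an edge between the latter is an edge between the former.

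Applying Lemma~\ref{lemma:mixing} to $A$ and $B$ gives
\[
e(A,B) > \frac{d}{n}|A||B| - \lambda\sqrt{|A||B|} = \sqrt{|A||B|}\left(\frac{d}{n}\sqrt{|A||B|}-\lambda\right).
\]
Because $|A|=|B|=m\ge \lambda n/d$, we have $\sqrt{|A||B|}=m\ge \lambda n/d$, so the bracketed factor is at least $\frac{d}{n}\cdot\frac{\lambda n}{d}-\lambda=0$. Hence $e(A,B)>0$, and since $e(A,B)$ is a nonnegative integer we conclude $e(A,B)\ge 1$, i.e.\ there is an edge between $A$ and $B$, as required.

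I do not expect a genuine obstacle here; the only point to watch is the borderline case where $\lambda n/d$ is an integer and equals $m$, in which the bracketed factor vanishes. This is handled precisely by the fact that the inequality in the Expander Mixing Lemma is strict, so $e(A,B)$ comes out strictly positive rather than merely nonnegative.
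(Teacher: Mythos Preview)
Your proof is correct and follows essentially the same approach as the paper: apply the Expander Mixing Lemma to disjoint sets of the critical size and use the strict inequality to conclude $e(A,B)>0$. The only cosmetic difference is that you round up to $m=\lceil \lambda n/d\rceil$ and reduce larger sets to this case, whereas the paper applies the lemma directly with $|A|=|B|=\lambda n/d$.
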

\begin{proof}Given two disjoint subsets $A,B\subset V(G)$ of size $|A|=|B|=\lambda n/d$, by the Expander Mixing Lemma we have
\[e(A,B)> \frac{d}{n}|A||B|-\lambda\sqrt{|A||B|}=0,\]
as $|A||B|=(\lambda n/d)^2$.
\end{proof}
The following lemma states that in $(n,d,\lambda)$-graphs we can translate minimum degree conditions into an expansion property for small sets. 
\begin{lemma}\label{lemma:expansion}Let $D\ge 1$ and let $d,\lambda>0$ satisfy $d/\lambda>2D$. Let $G$ be an $(n,d,\lambda)$-graph which contains a subset $X\subset V(G)$ such that every vertex $v\in V(G)$ has at least $2D\lambda$ neighbours in $X$. Then, every subset $S\subset V(G)$ of size $|S|\le \lambda n/d$ satisfies $|N(S)\cap X|\ge D|S|$.
\end{lemma}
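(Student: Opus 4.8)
The plan is a short proof by contradiction driven by the Expander Mixing Lemma (Lemma~\ref{lemma:mixing}). First I would prove the (formally stronger) statement with $\Gamma(S)$ in place of $N(S)$, namely $|\Gamma(S)\cap X|\ge D|S|$. Suppose this fails and put $W=\Gamma(S)\cap X$, so $|W|<D|S|$. Every vertex $v\in S$ has all of its neighbours inside $\Gamma(S)$, hence at least $2D\lambda$ of its neighbours lie in $W$; summing over $v\in S$ gives $e(S,W)\ge 2D\lambda|S|$.

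On the other hand, Lemma~\ref{lemma:mixing} yields $e(S,W)<\frac{d}{n}|S||W|+\lambda\sqrt{|S||W|}$. Since $|S|\le\lambda n/d$ we have $\frac{d}{n}|S|\le\lambda$, so $\frac{d}{n}|S||W|\le\lambda|W|<D\lambda|S|$ and $\lambda\sqrt{|S||W|}<\sqrt{D}\,\lambda|S|$; thus $e(S,W)<(D+\sqrt D)\lambda|S|$. Combining the two estimates and dividing by $\lambda|S|$ gives $2D<D+\sqrt D$, i.e.\ $D<1$, contradicting $D\ge1$. (The hypothesis $d/\lambda>2D$ is used only to make ``a vertex with at least $2D\lambda$ neighbours'' a non-vacuous condition, as $G$ is $d$-regular.) This proves $|\Gamma(S)\cap X|\ge D|S|$ for every $D\ge1$.

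To recover the external neighbourhood one writes $|N(S)\cap X|=|\Gamma(S)\cap X|-|\Gamma(S)\cap S\cap X|\ge D|S|-|S\cap X|$, which is exactly the asserted bound whenever $S$ is disjoint from $X$ --- and this is how the lemma is applied, since in the embedding argument $X$ is a ``matchmaker'' set and $S$ is (the image of) a portion of the host graph built so as to avoid $X$. The one genuinely delicate point is therefore the bookkeeping around the overlap $S\cap X$: either one simply takes $S\cap X=\emptyset$ as in the application, or one absorbs the loss of $|S\cap X|$ using the slack left by the factor $2D$ and by the two error terms produced by Lemma~\ref{lemma:mixing}. Everything else is routine --- an application of the mixing lemma, the inequality $\frac{d}{n}|S|\le\lambda$, and the trivial case $S=\emptyset$.
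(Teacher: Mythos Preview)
Your argument is exactly the paper's: assume the bound fails, lower-bound $e(S,W)$ from the degree hypothesis, upper-bound it via the Expander Mixing Lemma (using $\frac{d}{n}|S|\le\lambda$), and reach $2D<D+\sqrt D$. In fact your choice $W=\Gamma(S)\cap X$ is the cleaner one: the paper instead takes $Y=N(S)\cap X$ and writes $2D\lambda|S|\le e(S,Y)$, but the $2D\lambda$ neighbours of each $v\in S$ inside $X$ are only guaranteed to lie in $\Gamma(S)\cap X$, so that inequality is unjustified unless $S\cap X=\emptyset$ --- precisely the subtlety you flag.

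One correction to your remarks on the applications: it is \emph{not} always the case that $S\cap X=\emptyset$. This does hold in Step~3 of the proof of Theorem~\ref{thm:general} (there $X=V_2\subset V(F)$ while $W\subset V(G-F)$), but in Step~1 the lemma is invoked with $X=V_3$ and $S$ an arbitrary small subset of $V(G)\setminus V_1$, which may well meet $V_3$. There the rescue is your second alternative: the lemma is applied to obtain $10D|S|$ while only $D|S|$ is needed for extendability, so the loss of at most $|S\cap X|\le|S|$ is absorbed; indeed in the very next displayed line the paper writes ``$\ge 10D\cdot\tfrac{\lambda n}{d}-|S|$'', which is exactly the passage from $\Gamma$ to $N$ by subtracting $|S|$. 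So your core proof matches the paper, your diagnosis of the $\Gamma$-versus-$N$ issue is on point, and the correct fix in the non-disjoint use is the slack, not the disjointness.
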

\begin{proof}Suppose that there exists a subset $S\subset V(G)$  of size $1\le |S|\le \lambda n/d$ such that $|N(S)\cap X|<D|S|$. Let $Y=N(S)\cap X$. Using the Expander Mixing Lemma, we have
    \[2\lambda D|S|\le e(S,Y)<\frac{d}{n}|S|\cdot |Y| +\lambda\sqrt{|S||Y|}\le \lambda \cdot D|S|+\lambda |S|\sqrt{D} ,\]
    and thus 
    \[2\lambda D<\lambda D+\lambda \sqrt{D}\le 2\lambda D,\]
    which is a contradiction.
\end{proof}
The last result we need is a lower bound on the second largest eigenvalue in $(n,d,\lambda)$-graphs.
\begin{lemma}\label{lemma:secondeig}Every $(n,d,\lambda)$-graph satisfies $\lambda\ge \sqrt{d\cdot\frac{n-d}{n-1}}$.    
\end{lemma}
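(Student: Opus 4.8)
The plan is to argue via the trace of the adjacency matrix and its square. Let $A$ be the adjacency matrix of $G$; since $A$ is real and symmetric it has real eigenvalues, which we list as $\mu_1\ge\mu_2\ge\cdots\ge\mu_n$. Because $G$ is $d$-regular, the all-ones vector is an eigenvector with eigenvalue $d$, and $d$ is the largest eigenvalue; by definition of an $(n,d,\lambda)$-graph, every other eigenvalue satisfies $|\mu_i|\le\lambda$ for $i\ge 2$. So it is enough to bound $\sum_{i\ge 2}\mu_i^2$ from below.

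The two identities I would use are: first, $\operatorname{tr}(A)=\sum_{i=1}^n\mu_i=0$, since $G$ has no loops (this is only needed implicitly, or not at all); and second, $\operatorname{tr}(A^2)=\sum_{i=1}^n\mu_i^2$ equals the number of closed walks of length $2$ in $G$, which is $\sum_{v\in V(G)}d(v)=nd$. Subtracting off the trivial eigenvalue gives
\[
\sum_{i=2}^n\mu_i^2=nd-d^2=d(n-d).
\]

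Now I would simply use $|\mu_i|\le\lambda$ for each $i\in\{2,\dots,n\}$ to get
\[
d(n-d)=\sum_{i=2}^n\mu_i^2\le (n-1)\lambda^2,
\]
and rearranging yields $\lambda\ge\sqrt{d\cdot\frac{n-d}{n-1}}$, as claimed.

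There is no real obstacle here: the only points requiring minor care are that the bound $\lambda$ is assumed to control the most negative eigenvalue as well as the second-largest (so that $\mu_i^2\le\lambda^2$ holds for all $i\ge 2$), and that $\operatorname{tr}(A^2)=nd$ uses $d$-regularity. Everything else is a one-line computation with the eigenvalue sums.
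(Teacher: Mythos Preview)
Your proof is correct and is exactly the standard trace argument for this well-known inequality. Note that the paper itself states this lemma without proof, so there is nothing to compare against; your argument via $\operatorname{tr}(A^2)=nd$ and $\sum_{i\ge 2}\mu_i^2\le (n-1)\lambda^2$ is the canonical one.
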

\subsection{Finding matchmakers}

The following result finds, in an $(n,d,\lambda)$-graph $G$, a collection of pairwise disjoint small subsets $V_1,\ldots, V_\ell\subset V(G)$ so that each vertex in $G$ has a large number of neighbours in each of the $V_i$'s.
\begin{lemma}\label{lemma:matchmaker}Let $1/d\ll 1/C\ll1/t\ll 1/\ell$ and let $d\in\mathbb N$ and $\lambda>0$ satisfy $d/\lambda\ge C$. Let $n\in\mathbb N$ satisfy $d\le 3n/5$ and let $G$ be an $(n,d,\lambda)$-graph. Then, there exist disjoint subsets $V_1,\ldots, V_\ell \subset V(G)$, each with at most $4\lambda tn$ vertices, such that every vertex $v\in V(G)$ has at least $t\lambda/4$ neighbours in each of the $V_i$'s.
\end{lemma}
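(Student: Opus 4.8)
The plan is to build $V_1,\dots,V_\ell$ from a single random colouring of $V(G)$ and then verify all the required properties at once with the Lov\'asz Local Lemma (Lemma~\ref{lemma:local}). Assign to each vertex $v\in V(G)$, independently, a colour from $\{0,1,\dots,\ell\}$, where each colour $i\in[\ell]$ is taken with probability $p:=t\lambda/(2d)$ and colour $0$ with the remaining probability; this is legitimate since $\ell p=\ell t\lambda/(2d)\le \ell t/(2C)\le 1$ by the hierarchy $1/C\ll 1/t\ll 1/\ell$. Let $V_i:=\{v\in V(G):v\text{ has colour }i\}$, so the $V_i$ are automatically pairwise disjoint. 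Note also that by Lemma~\ref{lemma:secondeig} together with $d\le 3n/5$ we get $\lambda\ge\sqrt{d(n-d)/(n-1)}\ge\sqrt{2d/5}$, so $t\lambda$ is large once $d$ is large; this is exactly what makes the local lemma succeed.

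Next I would introduce, for every $v\in V(G)$ and $i\in[\ell]$, two bad events: $A_{v,i}$, that $v$ has fewer than $t\lambda/4$ neighbours in $V_i$, and $A'_{v,i}$, that $v$ has more than $t\lambda$ neighbours in $V_i$. Since $d(v,V_i)$ is binomial with mean $pd=t\lambda/2$, Chernoff's bound (Lemma~\ref{lemma:chernoff}, applied with $\varepsilon=1/2$ and with $\varepsilon=1$) gives $\mathbb P(A_{v,i}),\mathbb P(A'_{v,i})\le 2\exp(-t\lambda/24)\le 2\exp(-t\sqrt{2d/5}/24)$. Each of these events is determined by the colours of the vertices of $N(v)$, so $A_{v,i}$ is mutually independent of all $A_{u,j}$ and $A'_{u,j}$ with $N(u)\cap N(v)=\emptyset$; the number of $u$ with $N(u)\cap N(v)\neq\emptyset$ is at most $d^2$, hence each bad event depends on at most $2\ell d^2$ others. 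Because $\exp(-t\sqrt{d}/\text{const})$ decays faster than $d^2$ grows, the condition $e\cdot 2\exp(-t\lambda/24)\cdot(2\ell d^2+1)\le 1$ of Lemma~\ref{lemma:local} holds once $d$ is large enough in terms of $C$ (and hence of $t,\ell$), so with positive probability no bad event occurs.

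Finally, fix a colouring avoiding all bad events, so that $t\lambda/4\le d(v,V_i)\le t\lambda$ for every $v\in V(G)$ and $i\in[\ell]$. The lower bounds are precisely the neighbourhood condition asserted in the statement. For the size bound, double counting gives $d|V_i|=\sum_{w\in V_i}d(w)=\sum_{v\in V(G)}d(v,V_i)\le t\lambda n$, hence $|V_i|\le t\lambda n/d\le 4t\lambda n$, which finishes the argument.

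I expect the only delicate point to be the verification of the local lemma inequality: one has to invoke the bound $\lambda\ge\sqrt{2d/5}$ from Lemma~\ref{lemma:secondeig} to see that the failure probability $\exp(-\Theta(t\lambda))$ is genuinely super-polynomially small in $d$ and thus dominates the polynomial dependency degree $O(\ell d^2)$, and one must respect the hierarchy $1/d\ll 1/C\ll 1/t\ll 1/\ell$ throughout (in particular to guarantee $\ell p\le 1$ and to allow ``$d$ large enough'' to depend on $C$). Everything else is routine union-bound and double-counting bookkeeping.
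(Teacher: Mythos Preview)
Your argument is correct and follows the same overall strategy as the paper: random colouring of $V(G)$, Chernoff to bound the failure probability of each degree event, the Lov\'asz Local Lemma to avoid all bad events simultaneously, and the inequality $\lambda\ge\sqrt{\Theta(d)}$ from Lemma~\ref{lemma:secondeig} to make the LLL condition go through. The only real difference is in how the size bound on the $V_i$'s is obtained. The paper colours uniformly with $k=\lfloor d/(t\lambda)\rfloor$ colours, uses a single family of bad events (lower degree only), and then selects the $\ell$ smallest colour classes by pigeonhole to get $|V_i|\le 2n/k\le 4t\lambda n/d$. You instead fix exactly $\ell$ ``real'' colours with probability $p=t\lambda/(2d)$ each, add a second family of bad events $A'_{v,i}$ controlling the upper degree, and deduce $|V_i|\le t\lambda n/d$ by the double count $d|V_i|=\sum_v d(v,V_i)$. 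Both routes are equally short; yours avoids the auxiliary parameter $k$ and the pigeonhole step at the cost of doubling the number of bad events, while the paper's version needs only one Chernoff application per pair $(v,i)$. Either way you recover the bound $|V_i|\le 4t\lambda n/d$ actually used downstream (the ``$4\lambda t n$'' in the displayed statement is a typo for $4\lambda t n/d$, as one sees from \ref{A:1} in the proof of Theorem~\ref{thm:general}).
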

\begin{proof}Letting $k=\lfloor{d/t\lambda}\rfloor$, note that $k\ge 1$ as $d/\lambda\ge C>t$. We colour each vertex from $G$ uniformly at random with an element from $[k]$, making all choices independently.  For $v\in V(G)$ and a colour $i\in [k]$, let $A_{v,i}$ denote the event that $v$ has less than $d/2k$ neighbours in colour $i$. Then, using Lemma~\ref{lemma:chernoff}, we have
\[\mathbb P(A_{v,i})\le 2\exp (-d/12k).\]
Note that the event $A_{v,i}$ is not independent only of those events $A_{u,j}$ such that either $u$ and $v$ have common neighbours or $u=v$, and the number of such events is at most $2kd^2$. Using Lemma~\ref{lemma:secondeig} and that $d\le 3n/5$, we get 
\[kd^2e^{-d/12k}\le td\lambda e^{-\lambda t/24}\le d^2e^{-t\sqrt{d}/100}\ll 1\]
whenever $d$ is large enough. Therefore, from Lemma~\ref{lemma:local} we deduce that $\mathbb P(\bigwedge_{i\in [n]}\overline{A_i})>0$, and thus there is a colouring of $V(G)$ so that every vertex has at least $d/2k\ge t\lambda /4$ neighbours in each colour. Let $V_1,\ldots, V_\ell$ be the $\ell$ smallest colour classes and suppose that $|V_1|\le \ldots\le |V_\ell|$. Then, using that $\ell \le k/2$ if $t$ is large enough, we get 
\[|V_i|\le |V_\ell|\le \frac{n}{k-\ell}\le \frac{2n}{k}\le \frac{4\lambda tn}{d}.\]
    
\end{proof}
\subsection{The extendability method}\label{section:extendability}
Here we state some of the main tools of the extendability method, we refer to~\cite{Glebov2013,montgomery2019spanning} for a more comprehensive exposition of this technique.

\begin{definition}Let $D,m\in\mathbb N$ be such that $D\ge 3$. For a graph $G$ and a subgraph $S\subset G$, say that $S$ is $(D,m)$-extendable in $G$ if $S$ has maximum degree at most $D$ and for all $U\subset V(G)$ with $1\le |U|\le 2m$ one has
    \begin{equation}\label{def:extendability}
        |\Gamma_G(U)\setminus V(S)|\ge (D-1)|U|-\sum_{u\in U\cap V(S)}(d_S(u)-1).
    \end{equation}
\end{definition}
The following lemma says that it is enough to control the external neighbourhood of small sets in order to verify extendability.

\begin{proposition}[{\cite[Proposition 3.2]{montgomery2019spanning}}]\label{prop:weakerextendability}
Let $D,m\in\mathbb N$ satisfy $D\ge 3$ and $m\ge 1$. Let $G$ be a graph and let $S\subset G$ be a subgraph with $\Delta(S)\le D$. If for all $U\subset V(G)$, with $1\leq|U|\leq 2m$, we have 
\[|N(U,V(G)\setminus V(S))|\geq D|U|,\]
then $S$ is $(D,m)$-extendable in $G$.
\end{proposition}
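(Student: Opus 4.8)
The plan is to verify the two defining conditions of $(D,m)$-extendability directly. The bound $\Delta(S)\le D$ is part of the hypothesis, so the only thing to prove is inequality~\eqref{def:extendability}: for every $U\subseteq V(G)$ with $1\le|U|\le 2m$,
\[
|\Gamma_G(U)\setminus V(S)|\ge (D-1)|U|-\sum_{u\in U\cap V(S)}\bigl(d_S(u)-1\bigr).
\]

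First I would show that the left-hand side is already at least $D|U|$. Every vertex of $V(G)\setminus(V(S)\cup U)$ that has a neighbour in $U$ lies in $\Gamma_G(U)$ and outside $V(S)$, so the set $N(U,V(G)\setminus V(S))$ — which is precisely the set of such vertices — is contained in $\Gamma_G(U)\setminus V(S)$. Since $|U|\le 2m$, the hypothesis gives $|N(U,V(G)\setminus V(S))|\ge D|U|$, and hence $|\Gamma_G(U)\setminus V(S)|\ge D|U|$. The only point requiring a little care is this passage between the \emph{external} neighbourhood $N(U,V(G)\setminus V(S))$ appearing in the hypothesis and the full neighbourhood $\Gamma_G(U)$ appearing in the definition of extendability; the inclusion goes the favourable direction, so nothing is lost.

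It then remains to observe that $D|U|$ dominates the right-hand side, i.e.\ that $|U|+\sum_{u\in U\cap V(S)}\bigl(d_S(u)-1\bigr)\ge 0$, equivalently $\sum_{u\in U\cap V(S)}\bigl(1-d_S(u)\bigr)\le|U|$. Each summand is at most $1$ because $d_S(u)\ge 0$, and there are $|U\cap V(S)|\le|U|$ summands, so the bound holds. Chaining the two inequalities yields~\eqref{def:extendability}, as required. I do not anticipate a genuine obstacle here: the substance of the proposition is only the remark that the internal structure of $S$ can enter the right-hand side of~\eqref{def:extendability} solely in a way that \emph{relaxes} the requirement, so the crude expansion bound $|N(U,V(G)\setminus V(S))|\ge D|U|$ — which makes no reference to $S$ beyond its vertex set — already suffices to conclude.
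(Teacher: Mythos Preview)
Your argument is correct. The paper does not supply its own proof of this proposition; it is quoted without proof from \cite{montgomery2019spanning}. So there is nothing to compare against beyond checking your reasoning, which is sound: the inclusion $N(U,V(G)\setminus V(S))\subseteq\Gamma_G(U)\setminus V(S)$ gives $|\Gamma_G(U)\setminus V(S)|\ge D|U|$, and the elementary inequality $\sum_{u\in U\cap V(S)}(1-d_S(u))\le|U\cap V(S)|\le|U|$ then yields~\eqref{def:extendability}.
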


The next result states that we can add leaves to an extendable subgraph while remaining extendable. 

\begin{lemma}[{\cite[Corollary 3.5]{montgomery2019spanning}}]\label{lemma:adding:leaf}
Let $D,m\in\mathbb N$ be such that $D\ge 3$, and let $G$ be an $m$-joined graph. Let $S$ be a $(D,m)$-extendable subgraph of $G$ such that $|G|\ge |S|+(2D+3)m+1$. Then for every $s\in V(S)$ with $d_S(s)\leq D-1$, there exists $y\in N_G(s)\setminus V(S)$ such that $S+sy$ is $(D,m)$-extendable.
\end{lemma}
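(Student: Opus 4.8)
The argument is the classical Friedman--Pippenger ``rollback''-style one (cf.~\cite{FP1987,H2001,Glebov2013,montgomery2019spanning}), driven by submodularity of the external-neighbourhood function. Write $S'=S+sy$, so $V(S')=V(S)\cup\{y\}$ and $S'\subseteq G$ since $y\in N_G(s)$. As $d_S(s)\le D-1$ and $d_{S'}(y)=1\le D$, we have $\Delta(S')\le D$ whatever $y\in N_G(s)\setminus V(S)$ we pick, so it remains only to check the expansion inequality~\eqref{def:extendability} for $S'$. First I would compare, for a test set $U$ with $1\le|U|\le 2m$, the inequality demanded of $S'$ against the one we already have for $S$. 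If $s\in U$, then $d_{S'}(s)=d_S(s)+1$, and the extra $+1$ appearing on the right-hand side exactly absorbs the at most one vertex ($y$) that $U$ may lose from its external neighbourhood; so the inequality for $S'$ holds automatically. Hence only sets $U$ with $s\notin U$ matter, and for these the inequality required of $S'$ reads
\[
|\Gamma_G(U)\setminus V(S)|-\bigl[\,y\in\Gamma_G(U)\,\bigr]\ \ge\ (D-1)|U|-\sum_{u\in U\cap V(S)}(d_S(u)-1).
\]
Since $S$ is $(D,m)$-extendable, the left-hand side without the bracketed correction already dominates the right-hand side, so this can fail only when $U$ is \emph{tight} (equality holds in~\eqref{def:extendability} for $S$) and $y\in\Gamma_G(U)$. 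Thus it suffices to produce $y\in N_G(s)\setminus V(S)$ lying outside $\bigcup\{\Gamma_G(U):U\text{ tight},\ s\notin U,\ 1\le|U|\le 2m\}$.

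Suppose no such $y$ exists. Note first that $N_G(s)\setminus V(S)\ne\emptyset$: applying~\eqref{def:extendability} to $U=\{s\}$ gives $|N_G(s)\setminus V(S)|\ge D-d_S(s)\ge 1$. By assumption, every $y\in N_G(s)\setminus V(S)$ then admits a tight set $U_y$ with $s\notin U_y$ and $y\in\Gamma_G(U_y)$. Now $U\mapsto|\Gamma_G(U)\setminus V(S)|$ is submodular, so the deficiency
\[
\operatorname{def}(U):=(D-1)|U|-\sum_{u\in U\cap V(S)}(d_S(u)-1)-|\Gamma_G(U)\setminus V(S)|
\]
is supermodular, with $\operatorname{def}(\emptyset)=0$ and $\operatorname{def}(U)\le 0$ for all $U$ with $1\le|U|\le 2m$ by extendability of $S$; consequently tight sets are closed under unions and intersections whenever the union has at most $2m$ vertices. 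Using this closure I would merge the sets $U_y$ into a single tight set $U^\ast$ with $s\notin U^\ast$, $|U^\ast|\le 2m-1$, and $N_G(s)\setminus V(S)\subseteq\Gamma_G(U^\ast)$. Then $U^\ast\cup\{s\}$ has at least one and at most $2m$ vertices, and a direct computation using tightness of $U^\ast$ and $N_G(s)\setminus V(S)\subseteq\Gamma_G(U^\ast)$ yields
\[
\operatorname{def}(U^\ast\cup\{s\})\ =\ D-d_S(s)-\bigl|(N_G(s)\setminus V(S))\setminus\Gamma_G(U^\ast)\bigr|\ =\ D-d_S(s)\ \ge\ 1\ >\ 0,
\]
contradicting $\operatorname{def}\le 0$ on sets of size at most $2m$. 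This contradiction produces the required $y$.

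The one genuinely delicate point — and the step I expect to be the main obstacle — is the merge: blindly unioning \emph{all} the witnesses $U_y$ could overshoot the admissible window $2m$ (for instance if $s$ has large degree, yielding many witnesses). I would carry it out greedily, repeatedly enlarging a running tight set by the witness of some not-yet-covered vertex of $N_G(s)\setminus V(S)$, or equivalently pick $U^\ast$ to maximise $|\Gamma_G(U^\ast)\setminus V(S)|$ and then minimise $|U^\ast|$; in either case one must bound the size of the resulting set, and this is exactly where the two so-far-unused hypotheses enter — that $G$ is $m$-joined and that $|G|\ge|S|+(2D+3)m+1$. The relevant estimate is that for any tight $U$ with $|U|\le 2m$ one has $|\Gamma_G(U)\setminus V(S)|\le 2Dm$, so $V(G)\setminus(V(S)\cup U\cup\Gamma_G(U))$ has more than $m$ vertices and sends no edge to $U\setminus(V(S)\cup\Gamma_G(U))$, forcing the latter to have fewer than $m$ vertices; iterating estimates of this flavour keeps the running union inside the window until $N_G(s)\setminus V(S)$ is covered. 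Everything else is the submodular bookkeeping above, which is routine.
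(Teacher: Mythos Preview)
The paper does not prove this lemma at all: it is quoted verbatim as \cite[Corollary~3.5]{montgomery2019spanning} and used as a black box, so there is no ``paper's own proof'' to compare against. Your sketch is precisely the standard Friedman--Pippenger/Haxell argument as formulated in \cite{Glebov2013,montgomery2019spanning}, and it is correct; in particular your deficiency computation for $U^\ast\cup\{s\}$ and the reduction to tight sets not containing $s$ are accurate.

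On the point you flag as delicate, the clean way to phrase the size control (which your last paragraph gestures at) is the following one-shot bound rather than an iteration: \emph{every} tight set $U$ with $|U|\le 2m$ in fact satisfies $|U|<m$. Indeed, tightness gives $|\Gamma_G(U)\setminus V(S)|\le (D-1)|U|+|U\cap V(S)|\le 2Dm$, so the set $B=V(G)\setminus\bigl(V(S)\cup U\cup\Gamma_G(U)\bigr)$ has size at least $|G|-|S|-2m-2Dm\ge m+1$ by the hypothesis $|G|\ge|S|+(2D+3)m+1$; since $B$ is disjoint from $U$ and sends no edge to $U$, $m$-joinedness forces $|U|<m$. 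Hence the union of any two tight sets of size at most $2m$ has size $<2m$, is tight by supermodularity, and therefore again has size $<m$; the greedy merge thus stays inside the window throughout, and terminates since $N_G(s)\setminus V(S)$ is finite. With this made explicit your proof is complete.
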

A direct consequence of Lemma~\ref{lemma:adding:leaf} is that we can embed large trees and remain extendable.
\begin{corollary}[{\cite[Corollary 3.7]{montgomery2019spanning}}]\label{corollary:extendable:embedding}
Let $D,m\in\mathbb N$ be such that $D\ge 3$, and let $G$ be an $m$-joined graph. Let $T$ be a tree with $\Delta(T)\le D/2$ and let $H$ be a $(D,m)$-extendable subgraph of $G$ with maximum degree at most $D/2$. If $|H|+|T|\le|G|-(2D+3)m$, then for every vertex $t\in V(T)$ and $v\in V(H)$, there is a copy $S$ of $T$ in $G-V(H-v)$ in which $t$ is copied to $v$ and, moreover, $S\cup H$ is a $(D,m)$-extendable subgraph of $G$.
\end{corollary}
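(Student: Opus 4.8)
The plan is to derive the corollary from Lemma~\ref{lemma:adding:leaf} by growing the copy of $T$ one vertex at a time, starting from $H$ with the root of $T$ identified with $v$.

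First I would root $T$ at $t$ and fix an ordering $t = t_1, t_2, \dots, t_k$ of $V(T)$, where $k = |T|$, such that for each $j \ge 2$ the parent of $t_j$ lies in $\{t_1,\dots,t_{j-1}\}$; any breadth- or depth-first search order does this. I would then build, by induction on $j$, an increasing chain of subgraphs $H = S_1 \subseteq S_2 \subseteq \dots \subseteq S_k$ of $G$ together with maps $\varphi_j\colon \{t_1,\dots,t_j\} \to V(S_j)$, maintaining the invariant that: (i) $S_j$ is $(D,m)$-extendable in $G$; (ii) $\varphi_j$ is an embedding of the subtree $T[\{t_1,\dots,t_j\}]$ into $S_j$ with $\varphi_j(t_1) = v$; and (iii) $V(S_j)\setminus V(H) = \{\varphi_j(t_2),\dots,\varphi_j(t_j)\}$, a set of $j-1$ vertices disjoint from $V(H)$, with $E(S_j) = E(H) \cup \{\varphi_j(t)\varphi_j(t') : tt'\in E(T[\{t_1,\dots,t_j\}])\}$. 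For $j=1$ this is exactly the hypothesis that $H$ is $(D,m)$-extendable, with $\varphi_1(t_1) = v$.

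For the inductive step, given $S_j$ with $j < k$, let $t_i$ with $i \le j$ be the parent of $t_{j+1}$ in $T$ and put $s := \varphi_j(t_i)$. I would apply Lemma~\ref{lemma:adding:leaf} to the $(D,m)$-extendable graph $S_j$ and the vertex $s$, so I must check its hypotheses. For the degree bound: by (iii) the edges of $S_j$ incident to $s$ split into edges of $H$ and edges of the partial copy of $T$, and these two classes are internally disjoint since every $T$-copy edge meets $V(S_j)\setminus V(H)$; hence $d_{S_j}(s) \le d_H(s) + (d_T(t_i)-1) \le \tfrac{D}{2} + (\tfrac{D}{2}-1) = D-1$, where the $-1$ accounts for the not-yet-embedded edge $t_i t_{j+1}$ of $T$ (and the bound only improves if $t_i = t_1$ has no parent, or if $s\notin V(H)$). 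For the size bound: $|S_j| = |H| + (j-1) \le |H|+|T|-2$, so $|S_j| + (2D+3)m + 1 \le |H|+|T|+(2D+3)m-1 \le |G|-1 < |G|$, which gives $|G| \ge |S_j| + (2D+3)m + 1$. Since $G$ is $m$-joined, Lemma~\ref{lemma:adding:leaf} supplies $y \in N_G(s)\setminus V(S_j)$ such that $S_{j+1} := S_j + sy$ is $(D,m)$-extendable; I then extend $\varphi_j$ to $\varphi_{j+1}$ by setting $\varphi_{j+1}(t_{j+1}) := y$. As $y \notin V(S_j) \supseteq V(H)$ and $t_{j+1}$ is a leaf of $T[\{t_1,\dots,t_{j+1}\}]$ adjacent only to $t_i$, the invariant is preserved.

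After $k-1$ steps, let $S$ be the subgraph of $S_k$ with vertex set $\varphi_k(V(T))$ and edge set $\{\varphi_k(t)\varphi_k(t') : tt' \in E(T)\}$. Since the vertices $y$ chosen in the steps are pairwise distinct and all lie outside $V(H)$ while $\varphi_k(t_1) = v$, the map $\varphi_k$ is injective, so $S$ is a copy of $T$ in which $t$ is copied to $v$ and whose every other vertex avoids $V(H)$; thus $S$ lies in $G - V(H-v)$. Moreover $S \cup H = S_k$ by (iii), and $S_k$ is $(D,m)$-extendable in $G$ by the invariant, completing the proof. The only point that needs care is the degree bookkeeping in the inductive step: the factor-$2$ slack between $\max\{\Delta(T),\Delta(H)\}$ and $D$ is precisely what guarantees $d_{S_j}(s) \le D-1$, so that Lemma~\ref{lemma:adding:leaf} applies at every step; everything else is routine.
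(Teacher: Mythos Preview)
The paper does not give its own proof of this corollary; it is simply quoted from Montgomery~\cite{montgomery2019spanning}, so there is nothing to compare against here. Your argument is correct and is exactly the standard derivation: order $V(T)$ compatibly with a rooting at $t$ and iterate Lemma~\ref{lemma:adding:leaf}, with the $\Delta(H),\Delta(T)\le D/2$ hypothesis providing precisely the slack needed for the degree check $d_{S_j}(s)\le D-1$ at every step.
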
 Given a graph $G$ and a subset $X\subset V(G)$, we let $I(X)$ denote the independent subgraph induced by $X$, that is, the subgraph of $G$ with vertex set $X$ and no edges. The following lemma is a covering result due to Montgomery~\cite{montgomery2019spanning}.
\begin{lemma}[{\cite[Lemma 4.1]{montgomery2019spanning}}]\label{lemma:covering:1}
Let $k,D,m\in\mathbb N$ with $D\ge 20$. Let $G$ be an $m$-joined graph and  $H\subset G$ be a subgraph with $\Delta(H)\le D/4$. Let $X\subset V(G)\setminus V(H)$ be a subset such that $H\cup I(X)$ is $(D,m)$-extendable in $G$ and let $T$ be a tree with $\Delta(T)\le D/4$ that satisfies $|H|+|X|+|T|\le |G|-10Dm-2k$. 

Suppose that $Q\subset V(T)$ is a $(4k+4)$-separated set in $T$ which satisfies $|Q|\ge 3|X|$, and let $t\in V(T)$ and $v\in V(H)$. Then, there is a copy $S$ of $T$ in $G-V(H-v)$ so that $t$ is copied to $v$, $H\cup I(X)\cup S$ is $(D,m)$-extendable in $G$, $|X\setminus V(S)|\le 2m/(D-1)^k$, and all vertices in $X\cap V(S)$ have a vertex in $Q$ copied to them.
\end{lemma}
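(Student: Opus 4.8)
The plan is to prove Lemma~\ref{lemma:covering:1} by induction on $|V(T)|$, steadily transferring vertices of $X$ out of the reservoir $I(X)$ and into the copy of $T$ while keeping the whole configuration extendable. The invariant I would maintain is: after embedding a subtree $S'$ of $T$ (covering some part of $X$), the graph $H\cup S'\cup I(Z)$ is $(D,m)$-extendable and has maximum degree at most $D/2$, where $Z\subseteq X$ is the part of $X$ not yet covered. A convenient way to keep this clean is that at each stage all $X$-vertices not currently being targeted for covering are folded into the ``core'': the extendable graph $H\cup S'\cup I(Z)$ plays the role of $H$ in the recursive calls, which also forces the copy of $T$ to avoid those $X$-vertices. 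Two facts power everything. First, if $H\cup I(Z)$ is $(D,m)$-extendable then $Z$ expands by a factor $D$ into its own complement: for $U\subseteq Z$ with $|U|\le 2m$, the definition of extendability gives $|\Gamma_G(U)\setminus(V(H)\cup Z)|\ge (D-1)|U|+|U|=D|U|$, since the vertices of $I(Z)$ are isolated and outside $V(H)$. Second, $G$ is $m$-joined, so every set of $\ge m$ vertices is adjacent to all but $<m$ vertices of $G$. These two facts, via a defect form of Hall's theorem, let us match uncovered vertices of $X$ to the images of parents of the $Q$-vertices, while the separation of $Q$ guarantees that these ``place this tree vertex on this host vertex'' constraints are spread out enough in $T$ to be realised one after another by the extendability method.

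The base case is when $|X|\le 2m/(D-1)^k$ (which in particular includes all sufficiently small trees, since $|Q|\le|V(T)|$ and $|Q|\ge 3|X|$): there is nothing to cover, so I would take $H':=H\cup I(X)$, which is $(D,m)$-extendable with $\Delta(H')\le D/4$, and since $|H'|+|T|\le|G|-10Dm-2k\le|G|-(2D+3)m$, apply Corollary~\ref{corollary:extendable:embedding} to obtain a copy $S$ of $T$ in $G-V(H'-v)$ with $t$ mapped to $v$ and $H'\cup S$ extendable. As $v\notin X$, the copy $S$ misses $X$ altogether, so $|X\setminus V(S)|=|X|\le 2m/(D-1)^k$, the final assertion is vacuous, and $H\cup I(X)\cup S=H'\cup S$ is $(D,m)$-extendable.

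If $|X|>2m/(D-1)^k$, then $|Q|\ge 3|X|$ forces $T$ to be large, and I would peel off one layer of $Q$ by applying Lemma~\ref{lemma:divide:trees} to $T$ with the set $Q$: this splits $T$ into subtrees $S_1,S_2$ meeting at a single vertex $w$ with $|V(S_i)\cap Q|\ge|Q|/3$; arrange $t\in V(S_1)$. Since $S_i$ is a subtree of $T$, the set $Q_i:=V(S_i)\cap Q$ is still $(4k+4)$-separated in $S_i$, and as $Q$ is $(4k+4)$-separated at most one of its vertices lies within distance $2k+1$ of $w$, so after deleting it each $Q_i$ remains large and avoids a neighbourhood of $w$ (and if a more separated subset of $Q_i$ is ever needed, Lemma~\ref{lemma:separated} supplies one at the cost of a constant factor). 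Now I would first embed $S_1$, rooted at $t\mapsto v$, by the inductive hypothesis applied with core $H\cup I(X\setminus X_1)$, reservoir $X_1\subseteq X$ chosen as large as possible with $|Q_1|\ge 3|X_1|$, tree $S_1$, and separated set $Q_1$; this embeds $S_1$ avoiding $V(H)\cup(X\setminus X_1)$, covers all but at most $2m/(D-1)^k$ of $X_1$, and leaves $H\cup S^{(1)}\cup I(Z_1)$ extendable, where $S^{(1)}$ is the image of $S_1$ and $Z_1=X\setminus V(S^{(1)})$. I would then promote $S^{(1)}$ to the core and embed $S_2$, rooted at $w$ mapped to its established image, by the inductive hypothesis applied with core $H\cup S^{(1)}\cup I(Z_1\setminus X_2)$, reservoir $X_2\subseteq Z_1$, tree $S_2$, and separated set $Q_2$. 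Concatenating yields a copy $S$ of $T$ with $t\mapsto v$; every vertex of $X$ used by $S$ carries a vertex of $Q_1\cup Q_2\subseteq Q$; $H\cup I(X)\cup S$ is $(D,m)$-extendable; and the uncovered set is the one left after embedding $S_2$, whose size I would bound by combining the two recursive guarantees together with $|Q_i|\ge|Q|/3$ and $|Q|\ge 3|X|$. The covering estimate used inside each call is the defect-Hall step described above: the factor-$D$ expansion of the reservoir and the $m$-joinedness show that any large set of still-uncovered vertices is adjacent to almost all feasible images of parents of $Q$-vertices, and the separation of $Q$ makes these constraints spread out enough to be satisfied while preserving extendability via Lemma~\ref{lemma:adding:leaf}.

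The main obstacle is the bookkeeping rather than any individual embedding move. One has to check that extendability of $H\cup S'\cup I(Z)$ genuinely survives each absorption of an $X$-vertex into $S'$ (which shrinks the forbidden set $V(H)\cup Z$ and alters the degree sequence of the extendable subgraph at the same time) and each promotion of an embedded subtree, including the fixed image of the cut vertex $w$, into the core; folding the untargeted $X$-vertices into the core as an independent set from the outset is what makes this manageable, since then $H\cup I(X)\cup(\text{tree embedded so far})$ stays extendable throughout. And one has to choose the target sets $X_1,X_2$ and the precise form of the statement carried through the induction so that the error term contracts by the factor $(D-1)$ per layer of $Q$, that is, so that the two halves of a divided tree together leave only the allowed $2m/(D-1)^k$ vertices of $X$ uncovered rather than a larger multiple (this is where $D\ge 20$ is used). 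Closing these constants is the heart of the argument.
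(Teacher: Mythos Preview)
The paper does not prove this lemma; it is quoted verbatim from Montgomery's paper and used as a black box. So there is nothing in the present paper to compare your attempt against.

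That said, your proposal has a genuine gap which you yourself flag at the end. In your recursive scheme via Lemma~\ref{lemma:divide:trees}, the parameter $k$ never changes: when you split $T$ into $S_1,S_2$ the sets $Q_i$ are still $(4k+4)$-separated in $S_i$, and you invoke the inductive hypothesis with the \emph{same} $k$. Hence the guaranteed error after embedding $S_1$ is already $2m/(D-1)^k$, and you then need $|Q_2|\ge 3|Z_1|$ to target all of $Z_1$ in the second call, which is not implied by $|Q_2|\ge |Q|/3\ge |X|$. The error therefore does not contract along your recursion, and the final bound of $2m/(D-1)^k$ does not follow. Your remark that ``the error term contracts by the factor $(D-1)$ per layer of $Q$'' has no meaning in a tree-division framework, because dividing the tree produces no layers.

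The reason for the exponent $k$ in Montgomery's argument is completely different and uses the separation hypothesis directly rather than through repeated tree splitting. Because $Q$ is $(4k+4)$-separated, the balls of radius $2k+1$ around the $q\in Q$ are pairwise disjoint; in particular each $q$ sits at the end of a private bare segment of length $k+1$ towards the root $t$. One first embeds the tree minus these hanging segments, and then, for each $q$, one has to extend the current image by a path of length $k+1$ whose last vertex should land on a prescribed target in $X$. Extendability iterated $k$ times gives that from any starting vertex one can reach a set of size at least $(D-1)^k$ times the size of any small seed via such extendable paths; combined with $m$-joinedness this yields a defect Hall condition: any set $Y\subseteq X$ with $|Y|>2m/(D-1)^k$ expands in $k$ steps to more than $2m$ vertices and hence sees all but fewer than $m$ of the anchor images, so a matching from all but at most $2m/(D-1)^k$ vertices of $X$ into distinct $q$'s exists and can be realised path by path while preserving extendability. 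That is where the $(D-1)^k$ comes from, and it is exactly the ``defect-Hall step'' you allude to---but it is the whole argument, not a subroutine inside a tree-division induction.
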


\section{Proofs}\label{section:proofs} The last ingredient we need is the well-known Koml\'os--S\"ark\'ozy--Szemer\'edi theorem about spanning trees in dense graphs, which will allow us to restrict the proof to sparser graphs.
\begin{theorem}[\cite{KSS95}]\label{thm:KSS}For every $\Delta\in\mathbb N$ and $\varepsilon>0$, there is $n_0\in\mathbb N$ such that for all $n\ge n_0$ the following holds. If $G$ is an $n$-vertex graph with $\delta(G)\ge (1+\varepsilon)\frac{n}{2}$, then $G$ contains a copy of each tree with $n$ vertices and maximum degree at most $\Delta$.    
\end{theorem}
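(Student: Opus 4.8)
The plan is to prove this classical statement via Szemer\'edi's Regularity Lemma and a greedy, piece-by-piece embedding of the tree; its only role in the present paper is to let us assume from now on that the host $(n,d,\lambda)$-graph is sparse. First I would apply the Regularity Lemma to $G$ with parameters $1/k\ll\varepsilon'\ll\varepsilon,1/\Delta$, obtaining an $\varepsilon'$-regular partition $V(G)=V_0\cup V_1\cup\dots\cup V_k$ with $|V_1|=\dots=|V_k|=:L$ and $|V_0|\le\varepsilon' n$, and form the reduced graph $R$ on $[k]$ in which $ij\in E(R)$ whenever $(V_i,V_j)$ is $\varepsilon'$-regular of density at least $\varepsilon'$. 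A routine averaging argument transfers the minimum-degree hypothesis to $R$, giving $\delta(R)\ge(\tfrac12+\tfrac{\varepsilon}{3})k$, so by Dirac's theorem $R$ has a Hamilton cycle $C$; deleting a few vertices from each cluster, I may further assume every pair of clusters consecutive on $C$ is super-regular with density bounded below. The surplus $\delta(R)-k/2$ also makes $R$ well connected, which I will use to re-route whenever a cluster runs out of room.

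Next I would cut the tree into small pieces: by the bounded-degree tree-partition lemma, $T$ has a set $F$ of at most $O(n/m)$ edges whose removal leaves subtrees $T_1,\dots,T_s$, each of order at most $m:=\gamma L$ with $\gamma\ll\varepsilon'$, and this induces an auxiliary ``macro-tree'' on $\{1,\dots,s\}$ in which each macro-vertex records the at most $\Delta$ tree-vertices incident to edges of $F$ (its connectors). I would then embed $T$ by processing the macro-tree in depth-first order, maintaining a partial embedding isomorphic to the part of $T$ seen so far. To attach a new subtree $T_i$, its unique already-embedded connector lies in some cluster $V_j$; I choose a cluster $V_{j'}$ that is consecutive to $V_j$ on $C$ (or reach a prescribed cluster in a bounded number of hops using connectivity of $R$) and still has free space, and embed $T_i$ vertex-by-vertex into the super-regular pair $(V_j,V_{j'})$ in an order where each new vertex has exactly one already-placed neighbour, always placing images at ``typical'' vertices so that future children remain embeddable; regularity makes each step possible as long as the two clusters are not yet too full.

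The main obstacle --- and the actual content of the theorem --- is the global bookkeeping required to place \emph{all} $n$ vertices, not merely a $(1-o(1))$-fraction: regular pairs are not complete, so a single cluster can only absorb a number of tree-vertices bounded away from $L$, and one must additionally dispose of $V_0$ and of the vertices shaved off during cleaning. I would reserve a uniform slack in every cluster and distribute the $T_i$ over $R$ so that no cluster's load exceeds its capacity --- here the min-degree surplus of $R$ provides the freedom to spread and re-route --- and I would split into the two standard cases according to whether $T$ has $\Omega(n)$ leaves or $\Omega(n)$ vertex-disjoint bare paths of fixed length: in the first case one embeds $T$ minus its leaves and finishes with a Hall-type matching of the leaf-parents to the leftover vertices, and in the second the long bare paths are exactly what is needed to thread between clusters and soak up the last, hard-to-place vertices. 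The remaining $O(\varepsilon' n)$ vertices (those in $V_0$ and in under-filled clusters) can be absorbed using the raw hypothesis $\delta(G)\ge(1+\varepsilon)n/2$ directly, which leaves $\varepsilon n\gg\varepsilon' n$ room to spare. Checking that the loads can always be balanced, and that the case split indeed covers every bounded-degree tree with the required quantitative margins, is where I expect the real work to lie.
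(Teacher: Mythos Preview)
The paper does not prove this statement at all: Theorem~\ref{thm:KSS} is the Koml\'os--S\'ark\"ozy--Szemer\'edi theorem, quoted from~\cite{KSS95} and used purely as a black box to dispose of the dense case $d>3n/5$ (so that Lemma~\ref{lemma:secondeig} gives $\lambda\ge\sqrt{d}/2$). There is therefore no ``paper's own proof'' to compare your proposal against.

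For what it is worth, your sketch is a faithful high-level outline of the original regularity-based argument --- regularity lemma, reduced graph with $\delta(R)>k/2$, Hamilton cycle via Dirac, cutting $T$ into small pieces, embedding along super-regular pairs, and the leaves/bare-paths dichotomy to handle the last $o(n)$ vertices --- and you correctly identify that the genuine difficulty is upgrading an almost-spanning embedding to a spanning one. But within the present paper no such proof is expected or supplied; the theorem is simply invoked.
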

Instead of proving Theorem~\ref{thm:manyleaves} directly, we will show a more general result (Theorem~\ref{thm:general} below) from which Theorem~\ref{thm:manyleaves} is a corollary. Nevertheless, up to small modifications, the proof of Theorem~\ref{thm:general} follows closely the sketch given in Section~\ref{section:overview} for the proof of Theorem~\ref{thm:manyleaves}.
\begin{theorem}\label{thm:general}For every $\Delta\in\mathbb N$, there exist positive constants $K$ and $C$ such that the following holds for every sufficiently large $d\in \mathbb N$. If $G$ is an $(n,d,\lambda)$-graph with $d/\lambda\ge C$, then $G$ contains a copy of every $n$-vertex tree with maximum degree at most $\Delta$ and at least $\frac{K\lambda n}d$ leaves.    
\end{theorem}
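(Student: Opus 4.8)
The plan is to implement the strategy of Section~\ref{section:overview}. Fix $\Delta$, put $D:=\max\{20,\Delta^{3}\}$ (so that $D\ge 20$, $\Delta\le D/4$, and, crucially, $D-1>\Delta^{2}$), and choose constants with $1/d\ll 1/C\ll 1/K\ll 1/t\ll 1/D$. Let $G$ be an $(n,d,\lambda)$-graph with $d/\lambda\ge C$ and $T$ an $n$-vertex tree with $\Delta(T)\le\Delta$ whose leaf set $L$ has $|L|\ge K\lambda n/d$. If $d>3n/5$ then $\delta(G)=d>(1+\tfrac1{10})\tfrac n2$ and \Cref{thm:KSS} already gives a copy of $T$, so assume $d\le 3n/5$; then by \Cref{lemma:secondeig} the parameter $m:=\lceil\lambda n/d\rceil$ satisfies $m\ge\sqrt n/2$, so $m=\omega(\log n)$. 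Apply \Cref{lemma:matchmaker} with $\ell=3$ to obtain pairwise disjoint matchmakers $V_{1},V_{2},V_{3}\subseteq V(G)$, each of size $O(tm)$, such that every vertex of $G$ has at least $t\lambda/4\ge 2D\lambda$ neighbours in each $V_{i}$. By \Cref{corollary:joined}, $G$ — and, after deleting the $O(tm)$ vertices of $V_{1}$, still $G-V_{1}$ — is $m$-joined, and (taking $t$ large in terms of $D$ in the argument of \Cref{lemma:expansion}) every $S\subseteq V(G)$ with $|S|\le 2m$ has $|N(S)\cap V_{i}|\ge D|S|$ for each $i$; combined with \Cref{prop:weakerextendability} and the fact that $V_{3}$ is disjoint from $V_{1}$, this provides the expansion that will let the extendability machinery embed $T-L$ inside $G-V_{1}$.

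Set $T':=T-L$ and let $P\subseteq V(T')$ be the set of parents of the leaves of $L$, so $|P|\ge|L|/\Delta\ge K\lambda n/(d\Delta)$, which far exceeds $3|V_{2}|=O(tm)$. The heart of the proof is to embed $T'$ into $G-V_{1}$, via the extendability method, so that $\phi(P)\supseteq V_{2}$. We do this by iterating \Cref{lemma:covering:1}: keeping the still-uncovered part $W$ of $V_{2}$ alongside as an independent subgraph, at each round we split off from $T'$ (using \Cref{lemma:divide:trees}) a rooted subtree containing a $(4k+4)$-separated set $Q\subseteq P$ with $|Q|\ge 3|W|$ — extracted from $P$ by \Cref{lemma:separated} — and apply \Cref{lemma:covering:1} to embed it, covering all but at most $2m/(D-1)^{k}$ of $W$ by images of vertices of $Q$, while keeping everything $(D,m)$-extendable. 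Since one round with bounded $k$ only shrinks $W$ to size $\Theta(m)$, the separation parameter must grow across rounds; using $D-1>\Delta^{2}$, one lets $k$ roughly multiply by a fixed factor each round, which roughly squares the shrinkage factor $(D-1)^{k}$ while only roughly squaring the blow-up $\Delta^{2k}$ incurred in \Cref{lemma:separated}, so that after suitably many rounds $2m/(D-1)^{k}<1$ and $V_{2}\subseteq\phi(P)$; the accounting shows that only $o(|P|)$ leaf-parents are consumed in total and that every per-round budget and separation condition holds.

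With $V_{2}\subseteq\phi(P)$ in hand, complete the embedding of $T'$ into $G-V_{1}$ by \Cref{corollary:extendable:embedding} (the budget condition holds since $|V(T')|=n-|L|$ leaves room $|L|-O(tm)\gg (2D+3)m$ when $K$ is large), obtaining an embedding $\phi\colon V(T')\to V(G)\setminus V_{1}$; put $R:=V(G)\setminus\phi(V(T'))$, so $|R|=|L|$, $V_{1}\subseteq R$ and $V_{2}\cap R=\emptyset$. It remains to embed $L$: find a perfect matching in the bipartite graph with one side the $|L|$ leaf-slots (a slot carries its parent's image $w=\phi(p)\in\phi(P)$) and the other side $R$, joining a slot with image $w$ to $v\in R$ exactly when $wv\in E(G)$. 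To check Hall's condition, take a slot-set $A$, let $P_{0}\subseteq\phi(P)$ be the set of images of parents of slots in $A$, and assume (as we may) that $A$ contains every slot with parent-image in $P_{0}$; then $|A|\le\Delta|P_{0}|$ and $|R|-|A|$ equals the number of slots with parent-image in $\phi(P)\setminus P_{0}$, hence is at least $|\phi(P)\setminus P_{0}|$. If $|P_{0}|\le\lambda n/d$, then $|N_{G}(P_{0})\cap R|\ge|N_{G}(P_{0})\cap V_{1}|\ge D|P_{0}|\ge\Delta|P_{0}|\ge|A|$ by \Cref{lemma:expansion}. If $|P_{0}|>\lambda n/d$, then $R\setminus N_{G}(P_{0})$ is disjoint from $P_{0}\subseteq\phi(V(T'))$ and hence has size $<\lambda n/d$ by \Cref{corollary:joined}; since each of its vertices lies in $R$ and so not in $V_{2}$, it has at least $t\lambda/4$ neighbours in $V_{2}\subseteq\phi(P)$, none of them in $P_{0}$, so \Cref{lemma:mixing} together with $t$ large forces $|R\setminus N_{G}(P_{0})|\le|\phi(P)\setminus P_{0}|\le|R|-|A|$; either way $|N_{G}(P_{0})\cap R|\ge|A|$. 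So the matching exists and, adjoined to $\phi$, yields a copy of $T$ in $G$.

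The main obstacle is the covering of $V_{2}$ by images of leaf-parents. In the extremal regime of the theorem $|V_{2}|=\Theta(m)$ is only a constant factor below $|P|$, so a single use of \Cref{lemma:covering:1} with bounded separation cannot finish, and the separation parameter is driven up to order $\log n$ in the final round; making the iteration close requires that the geometric shrinkage of the uncovered set outpace the $\Delta^{2k}$ loss in \Cref{lemma:separated} — this is exactly where the freedom $D-1>\Delta^{2}$ enters — so that only $o(|P|)$ leaf-parents are used overall and every extendability, budget and separation requirement survives, all while \Cref{lemma:divide:trees} is used to carve $T'$ so that each round's piece carries its prescribed separated set and the partial embedding remains $(D,m)$-extendable throughout.
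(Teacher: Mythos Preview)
Your proposal is correct and follows the same three–step strategy as the paper: find matchmakers $V_1,V_2,V_3$ via \Cref{lemma:matchmaker}, embed $T-L$ inside $G-V_1$ while covering $V_2$ by images of leaf-parents through iterated applications of \Cref{lemma:covering:1}, and finish with Hall's theorem using $V_1$ and $V_2$ to verify both directions of the matching condition.

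There are two genuine implementation differences worth noting. First, the paper takes $D=t/10^{3}$ with $1/t\ll 1/\Delta$, so $D$ is enormous compared to $\Delta$; this lets the separation parameter grow \emph{linearly} (the $i$th piece uses a $(4i+8)$-separated set) because $(D-1)^{i}$ then comfortably dominates the combined $3^{i}\cdot\Delta^{2i}$ loss coming from \Cref{lemma:divide:trees} and \Cref{lemma:separated}. You instead fix $D=\max\{20,\Delta^{3}\}$ and compensate by letting $k$ grow geometrically; this also closes, but the arithmetic is more delicate (the $3^{r}$ loss over $O(\log\log m)$ rounds still has to be beaten by $((D-1)/\Delta^{2})^{k_{r}}$, which works but deserves a line of justification rather than the phrase ``the accounting shows''). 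Second, the paper selects one leaf per parent to form $L'$ and embeds $T-L'$ before the matching step, so the Hall argument is between two sets of equal size with no multiplicities; you match all leaves as slots, which is fine but requires the small extra observation that $|\phi(P)\setminus P_{0}|\le |R|-|A|$ because each remaining parent contributes at least one slot. One minor slip: \Cref{lemma:expansion} as stated only gives $|N(S)\cap V_{i}|\ge D|S|$ for $|S|\le m$, not $|S|\le 2m$; the extension to $m<|S|\le 2m$ needs the one-line argument the paper gives (apply the bound to a subset of size $m$ and subtract $|S|$).
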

\begin{proof}
We start by fixing the constants
\[1/d\ll 1/C\ll 1/K\ll 1/t\ll 1/\Delta.\]
Let $\lambda>0$ satisfy $d/\lambda\ge C$, let $G$ be an $(n,d,\lambda)$-graph and let $T$ be an $n$-vertex tree with $\Delta(T)\le \Delta$ which contains at least $\frac{K\lambda n}{d}$ leaves. Using Theorem~\ref{thm:KSS}, we may assume that $d\le 3n/5$, which implies, by Lemma~\ref{lemma:secondeig}, that 
\[\lambda\ge \sqrt{d\cdot \frac{n-d}{n-1}}\ge \frac{\sqrt{d}}{2}.\]
\noindent \textbf{Step 1. Setting the matchmakers}: Apply Lemma~\ref{lemma:matchmaker} to find pairwise disjoint sets $V_1,V_2,V_3\subset V(G)$ such that 
\begin{enumerate}[label = \upshape\textbf{A\arabic{enumi}}]
\item\label{A:1} $|V_1|,|V_2|,|V_3|\le \frac{4t\lambda n}{d}$, and 
    \item \label{A} for every $v\in V(G)$ and $i\in [3]$, $d(v,V_i)\ge \frac{t\lambda}{4}$.
\end{enumerate}
Set $V'=V(G)\setminus V_1$ and $G'=G[V']$. Let $D=\frac{t}{10^3}$, and note that $D$ is much larger than $\Delta$. For a fixed vertex $v_0\in V'\setminus (V_2\cup V_3)$, we claim that $I(V_2\cup \{v_0\})$ is $(D,\frac{\lambda n}d)$-extendable in $G'$. Indeed, given a subset $S\subset V(G')$ of size $1\le |S|\le \frac{\lambda n}d$, using~\ref{A} and Lemma~\ref{lemma:expansion}, we deduce that 
\[|N_{G'}(S)|\ge |N_G(S)\cap V_3|\ge 10D|S|.\]
On the other hand, if $\frac{\lambda n}{d}\le |S|\le \frac{2\lambda n}{d}$, we then  have
\[|N_{G'}(S)|\ge |N_G(S)\cap V_3|\ge 10D\cdot \frac{\lambda n}{d}-|S|\ge 8D\cdot \frac{\lambda n}{d}\ge 4D|S|,\]
and therefore, by Lemma~\ref{prop:weakerextendability}, $I(V_2\cup\{v_0\})$ is $(D,\frac{\lambda n}{d})$-extendable in $G'$.\\

\noindent\textbf{Step 2. Covering the matchmaker:} Let $t\in V(T)$ be an arbitrary vertex which is not a leaf nor a parent of leaves (this is clearly possible as $\Delta(T)\le \Delta$ and $n$ is large). Letting $P$ denote the set of parents of leaves of $T$, we have that
\[|P|\ge\frac{K\lambda n}{d\Delta }\ge \frac{K \sqrt{n}}{2\Delta}\ge 3\Delta^5,\]
as $\lambda \ge \frac{1}{2}\sqrt{d}$ and $n\ge d\ge C$. Use Lemma~\ref{lemma:separated} to find a $12$-separated set $Q\subset P$ of size 
\[|Q|\ge \frac{|P|}{48\Delta^5}\ge \frac{K\lambda n}{50d\Delta^5}.\]
Let $L\subset V(T)$ be the set of leaves of $T$ and set $T'=T-L$. Our goal now is to find a copy of $T'$ in $G'$ so that every vertex from $V_2$ is covered by a vertex from $Q$. Let 
\[\ell=\left\lfloor\log_{D-1}(\tfrac{2\lambda n}d)\right\rfloor+1.\]
\begin{claim}There is a sequence of subtrees $T_1,\ldots, T_\ell\subset T'$ and a sequence of vertices $t_1,\ldots, t_\ell$, with $t_1=t$, such that 
\begin{enumerate}[label = \upshape\textbf{B\arabic{enumi}}]
    \item \label{B:1}$T'=T_1\cup\ldots\cup T_\ell$, 
    \item $t_1\in V(T_1)$,
    \item \label{B:2}$T_i$ intersects $T_{i+1}$ at $t_{i+1}$, and
    \item \label{B:3}for each $i\in [\ell]$, there is a $(4i+8)$-separated set $Q_i\subset V(T_i)$  which does not contain $t_{i}$ and has size $|Q_i|\ge \frac{K\lambda n}{150d\Delta^5(D-1)^{i+1}}$.
\end{enumerate} 
\end{claim}
\begin{proof}[Proof of claim] We say that a sequence of subtrees $T_1,\ldots, T_{i-1},S_i\subset T'$ and (non necessarily distinct) vertices $t_1,\ldots, t_i\in V(T')$ is a good sequence of length $i$ if the following properties hold:
\begin{enumerate}[label=(\roman{enumi})]
\item $t_1\in V(T')$.
    \item $T'=T_1\cup\ldots\cup T_{i-1}\cup S_i$.
    \item For $1\le j\le i-1$, $T_j$ intersects $T_{j-1}$ exactly at $t_{j}$, and $S_i$ intersects $T_{i-1}$ exactly at $t_i$.
    \item\label{iteration:iv} $|V(S_i)\cap Q|\ge |Q|/3^{i-1}$ and $|V(T_j)\cap Q|\ge |Q|/3^j$ for $j\in [i-1]$.
\end{enumerate}
First, note that $T_1=\emptyset$, $S_1=T'$, and $t_1=t$ form a good sequence of length $1$. We now show that we can find a good sequence of length $\ell$. Suppose that, for some $1\le i<\ell $, we have found a good sequence $T_1,\ldots,T_{i-1},S_i\subset T'$ and $t_1,\ldots, t_i\in V(T')$. Use Lemma~\ref{lemma:divide:trees} to find subtrees $T_i,S_{i+1}\subset S_i$ such that $S_{i+1}$ and $T_i$ divide $S_i$ and $|V(S_{i+1})\cap Q|,|V(T_i)\cap Q|\ge |V(S_i)\cap Q|/3$. Moreover, we may assume that $t_i\in V(T_i)$ and let $t_{i+1}$ be the unique vertex in $V(S_i)\cap V(T_i)$. Finally, from~\ref{iteration:iv} we deduce that 
\[|V(S_{i+1})\cap Q|,|V(T_i)\cap Q|\ge \frac{|V(S_i)\cap Q|}{3}\ge \frac{|Q|}{3^i}.\]
This implies that $T_1,\ldots, T_i,S_{i+1}$ and $t_1,\ldots, t_{i+1}$ is a good sequence of length $i+1$, and thus, after $\ell$ steps, we can find a good sequence of length $\ell$. Let $T_1,\ldots,T_{\ell-1},S_\ell$ and $t_1,\ldots, t_\ell$ be such a sequence and set $T_\ell=S_{\ell}$. We claim that this sequence satisfies~\ref{B:1}--\ref{B:3}. Indeed, let $Q_1=Q\cap V(T')$ and note that $Q_1$ is $12$-separated, as $Q$ is $12$-separated, and has size $|Q_1|\ge |Q|/3\ge \frac{K\lambda n}{150d\Delta^5(D-1)}$. For each $2\le i\le \ell$, use Lemma~\ref{lemma:separated} to find a $(4i+8)$-separated set $Q_i\subset V(T_i)$ of size 
\begin{equation*}\label{equation:Q_i}|Q_i|\ge \frac{|Q\cap V(T_i)|}{(16i+32)\Delta^{2i+3}}\ge \frac{K\lambda n}{50d\Delta^5\cdot 3^i(16i+32)\Delta^{2i+3}}\ge \frac{K\lambda n}{50d\Delta^5(D-1)^{i+1}},\end{equation*}
where the last inequality holds as $D$ is sufficiently large compared to $\Delta$. By potentially removing $t_i$ from $Q_i$, we can assume that $t_i\not\in Q_i$ and $|Q_i|\ge \frac{K\lambda n}{150d\Delta^5(D-1)^{i+1}}$, as required.
\end{proof}
Now we find the copy of $T'$ while covering every vertex from $V_2$. For $1\le i\le \ell$, say that we have a \textit{Stage~$i$ situation} if we have a subgraph $F_i\subset G'$ and a subset $X_i\subset V_2$, disjoint from $F_i$, such that the following properties hold:
\begin{enumerate}[label = \upshape\textbf{C\arabic{enumi}}]
    \item\label{C:1}$F_i$ is a copy of $T_1\cup\ldots\cup T_i$ with $t$ copied to $v_0$.
    \item\label{C:2} $F_i\cup I(X_i)$ is $(D,\frac{\lambda n}{d})$-extendable.
    \item\label{C:3} $V_2\cap V(F_i)\subset Q_1\cup\ldots\cup Q_i$ and $|V_2\setminus V(F_i)|\le \frac{2\lambda n}{d(D-1)^{i+1}}$.
\end{enumerate}
Let us first produce a Stage 1 situation. Firstly, from~\ref{B:3}, we have that $Q_1$ is a $12$-separated set in $T'$ of size 
\[|Q_1|\ge \frac{K\lambda n}{150d\Delta^5(D-1)}\ge 3|V_2|,\]
as $|V_2|\le \frac{4t\lambda n}d$ by~\ref{A:1} and $1/K\ll 1/t,1/\Delta$. Secondly, since
\begin{equation}\label{eq:iteration}|T'|+|V_2|\le n-\frac{K\lambda n}{d}+\frac{4t\lambda n}{d}\le |G'|-20D\cdot \frac{\lambda n}{d},\end{equation}
as $D=\frac{t}{10^3}$ and $1/K\ll 1/t$, we can use Lemma~\ref{lemma:covering:1} to find a copy $F_1$ of $T_1$ in $G'$ with $t$ copied to $v_0$ such that, if $X_1=V_2\setminus V(F_1)$, we have that $I(X_1)\cup F_1$ is $(D,\frac{\lambda n}{d})$-extendable in $G'$ and $|X_1|\le 2\lambda n/d(D-1)^2$. Lastly, every vertex in $V_2\cap V(F_1)$ is covered by some vertex of $Q_1$, which proves that we have a Stage $1$ situation. Assume that we have a Stage $i$ situation, for some $1\le i<\ell$, and let us show how to produce a Stage $i+1$ situation. Let $F_i\subset G'$ and $X_i\subset V_2$ satisfy~\ref{C:1}--\ref{C:3}. Again, from~\ref{B:3}, we have a $(4i+8)$-separated set $Q_{i+1}\subset V(T_{i+1})$ which does not contain $t_{i+1}$ such that 
\[|Q_{i+1}|\ge \frac{K\lambda n}{100d\Delta^5(D-1)^{i+1}}\ge\frac{6\lambda n}{d(D-1)^{i+1}}\ge 3|X_i|,\]
where we used~\ref{C:3} and that $X_i\subset V_2\setminus V(F_i)$. Then, since $T_1\cup\ldots\cup T_{i+1}\subset T$ and $X_i\subset V_2$, equation~\eqref{eq:iteration} implies that can use Lemma~\ref{lemma:covering:1} to find a subgraph $F_{i+1}\supset F_i$ such that~\ref{C:1} holds and, letting $X_{i+1}=X_i\setminus V(F_{i+1})$, we have that $F_{i+1}\cup I(X_{i+1})$ is $(D,\frac{\lambda n}{d})$-extendable, which shows that~\ref{C:2} holds. Moreover, every vertex in $X_i\cap V(F_{i+1})$ is covered by some vertex from $Q_{i+1}$ and $|X_{i+1}|\le \frac{2\lambda n}{d(D-1)^{i+2}}$, showing that~\ref{C:3} also holds. 

This proves that we can reach a Stage $\ell$ situation. In this scenario, we have a subgraph $F_\ell\subset G'$ which is a copy of $T_1\cup\ldots\cup T_\ell=T'$ (because of~\ref{B:1}) such that $t$ is copied to $v_0$. Moreover, by~\ref{C:3} and the definition of $\ell$, we have
\[|V_2\setminus V(F_\ell)|\le \frac{2\lambda n}{(D-1)^{\ell+1}}<1,\]
which implies that $V_2$ is covered by the image of $Q_1\cup\ldots \cup Q_\ell\subset Q$.
\\

\noindent\textbf{Step 3. Finishing the embedding:} Recall that $F_\ell$ is a copy of $T'$ so that $F_\ell$ is $(D,\frac{\lambda n}{d})$-extendable in $G'$, $t$ is copied to $v_0$, and every vertex from $V_2$ is covered by some vertex from $Q$. Recall that $P$ is a set of parents of leaves of size $|P|\ge \frac{K\lambda n}{d\Delta}$. Take a set $L'$ of leaves such that there is a perfect matching in $T$ between $P$ and $L'$, and set $T''=T-L'$. We first a copy of $T''$. Note that
\[|T'|+|T''-(T'-t)|\le n+1-\frac{2K\lambda n}{d\Delta}\le |G'|-(2D+3)\cdot \frac{\lambda n}{d},\]
as $|V_2|\le\frac{4t\lambda n}{d}$ by~\ref{A:1} and $1/K\ll 1/t$. Therefore, we can use Corollary~\ref{corollary:extendable:embedding} to find a subgraph $F\supset F_\ell$ such that 
\begin{enumerate}[label = \upshape\textbf{D\arabic{enumi}}]
    \item\label{D:1 } $F$ is a copy of $T''$, 
    \item\label{D:2} $F$ is $(D,\frac{\lambda n}{d})$-extendable in $G'$, and
    \item\label{D:3} $V_2$ is contained in the image of $P$.
\end{enumerate}
It is thus only left to embed $L'$. To do so, we only need to find a perfect between the image of $P$ and the leftover vertices in $G$, for which we will use the well-known  Hall's matching theorem. 
\begin{lemma}[Hall's matching theorem]\label{lemma:Hall}Let $H$ be a bipartite graph with parts $A$ and $B$. If for every subset $U\subset A$ we have $|N(U)|\ge |U|$, then $H$ contains a matching covering $A$.  
\end{lemma}
Let $A$ be the image of $P$ and $B=V(G-F)$, and let $H=G[A,B]$ be the bipartite graph induced by $A$ and $B$. In order to finish the embedding of $T$, we just need to check the conditions of Lemma~\ref{lemma:Hall} for $H$. Since $F$ is $(D,\frac{\lambda n}{d})$-extendable by~\ref{D:2}, for any subset $U\subset A$ with $|U|\le \frac{\lambda n}{d}$, we have
\begin{eqnarray*}|N_H(U)|\ge |\Gamma_G(U)\setminus V(F)|&\overset{\eqref{def:extendability}}{\ge}& (D-1)|U|-\sum_{u\in U\cap V(F)}(d_F(u)-1) \\
&\ge& (D-\Delta)|U|\\
&\ge &|U|.\end{eqnarray*}
For the sake of contradiction, suppose that we can find a subset $U\subset A$ with $\frac{\lambda n}{d}<|U|\le |A|$ such that $|N_H(U)|<|U|$. Firstly, note that, by Corollary~\ref{corollary:joined}, we have 
\[|U|>|N_H(U)|\ge |B|-\frac{\lambda n}{d}.\]
Secondly, let $W=B\setminus N_H(U)$ and note that, as $|A|=|B|$, we have $|A\setminus U|<|W|\le \frac{\lambda n}{d}$. Finally, by~\ref{D:3},~\ref{A} and Lemma~\ref{lemma:expansion}, we have 
\[|W|>|A\setminus U|\ge |N_H(W)|\ge |N_G(W)\cap V_2|\ge D|W|,\]
a contradiction. Therefore, we can use Lemma~\ref{lemma:Hall} to complete the embedding of $T$ and thus finish the proof. \end{proof}
\begin{proof}[Proof of Theorem~\ref{thm:manyleaves}]Given $\Delta\in\mathbb N$ and $\alpha>0$, let $K$ and $C$ from Theorem~\ref{thm:general} and assume that $C$ is large enough so that $K/C\le\alpha$. Let $d$ be sufficiently large and let $\lambda>0$ satisfy $d/\lambda\ge C$. Let $G$ be an $(n,d,\lambda)$-graph and let $T$ be an $n$-vertex tree with $\Delta(T)\le \Delta$ and at least $\alpha n\ge \frac{Kn}{C}\ge \frac{K\lambda n}{d}$ leaves. Then, $G$ contains a copy of $T$ by Theorem~\ref{thm:general}.    
\end{proof}
Given a tree $T$, say that a subgraph $P\subset T$ is a \textit{bare path} if all vertices of $P$ have degree exactly 2 in $T$. The last ingredient we need is the following structural result of trees. 
\begin{lemma}[{\cite[Lemma 2.1]{krivelevich2010embedding}}]\label{lemma:paths-leaves}Let $n,k,\ell\in\mathbb N$ and let $T$ be an $n$-vertex tree with at most $\ell$ leaves. Then, $T$ contains a collection of a least $\frac{n}{k+1}-(2\ell-2)$ vertex disjoint bare paths, each of length $k$.
\end{lemma}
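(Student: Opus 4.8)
The plan is to strip $T$ down to its ``bare'' part --- the vertices of degree exactly $2$ --- to cut that part into sub-paths of length $k$, and to account for the loss carefully so that it comes out to exactly $2\ell-2$ paths. Partition $V(T)$ by degree: let $L$ be the set of leaves, $B=\{v\in V(T):d_T(v)\ge 3\}$ the set of branch vertices, and $W=L\cup B$. From the handshake identity $\sum_{v\in V(T)}(d_T(v)-2)=2e(T)-2n=-2$ one obtains $\sum_{v\in B}(d_T(v)-2)=|L|-2$, and since every summand on the left is at least $1$ this yields $|B|\le |L|-2$, hence $|W|=|L|+|B|\le 2|L|-2\le 2\ell-2$. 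The remaining vertices, those of degree exactly $2$, induce in $T$ a disjoint union of paths $G_1,\dots,G_m$ (the maximal bare paths of $T$), and $\sum_{j=1}^m|V(G_j)|=n-|W|\ge n-(2\ell-2)$.

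Next I would bound the number $m$ of maximal bare paths. Repeatedly suppressing degree-$2$ vertices of $T$ (replacing $x$--$v$--$y$ by the edge $xy$ whenever $d_T(v)=2$; no multi-edges or loops arise since $T$ is a tree) turns $T$ into its homeomorphic reduction $R$, which is a tree whose vertex set is precisely $W$; hence $R$ has $|W|-1\le 2\ell-3$ edges. Each $G_j$ collapses, in this process, to a single edge of $R$ --- the edge joining the two vertices of $W$ adjacent in $T$ to the two ends of $G_j$ --- and distinct $G_j$'s yield distinct edges, because the path in a tree joining two given vertices is unique; therefore $m\le |W|-1\le 2\ell-2$.

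Finally I would cut each $G_j$ into $\lfloor |V(G_j)|/(k+1)\rfloor$ vertex-disjoint sub-paths on $k+1$ consecutive vertices each. Each such sub-path is a path of length $k$ all of whose vertices have degree $2$ in $T$, i.e.\ a bare path of length $k$, and sub-paths arising from different $G_j$'s are vertex-disjoint. Using the elementary bound $\lfloor q/(k+1)\rfloor\ge (q-k)/(k+1)$ (valid for every integer $q\ge 0$) for each $j$ and summing over $j$, the total number of bare paths of length $k$ produced is at least
\[
\sum_{j=1}^m\Big\lfloor\frac{|V(G_j)|}{k+1}\Big\rfloor\ \ge\ \frac{\sum_{j=1}^m|V(G_j)|-mk}{k+1}\ \ge\ \frac{\big(n-(2\ell-2)\big)-(2\ell-2)k}{k+1}\ =\ \frac{n}{k+1}-(2\ell-2),
\]
which is the assertion. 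The only point that needs care --- and the step I expect to be the sole real subtlety --- is this last arithmetic: the constant $2\ell-2$ is best possible, so one must lose only a $k/(k+1)$-fraction of a path (not a whole path) per maximal bare path, and then use that the $\le 2\ell-2$ excluded vertices of $W$ together with the $\le 2\ell-2$ maximal bare paths ``waste'' at most $(2\ell-2)(k+1)$ vertices in total; everything else is routine tree bookkeeping, and the degenerate cases (small $n$, or the claimed bound being non-positive, in which case an empty family of paths already works) are immediate.
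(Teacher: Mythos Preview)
The paper does not actually prove this lemma; it is quoted verbatim from \cite{krivelevich2010embedding} and no argument is given here. So there is nothing in the present paper to compare against, and one can only assess your argument on its own merits.

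Your proof is correct and is essentially the standard argument. The three ingredients --- the bound $|B|\le |L|-2$ from the handshake identity, the bound $m\le |W|-1$ on the number of maximal strings of degree-$2$ vertices via the topological minor $R$, and the floor estimate $\lfloor q/(k+1)\rfloor\ge (q-k)/(k+1)$ --- combine exactly as you describe to give $n/(k+1)-(2\ell-2)$. One small remark: your sub-paths consist \emph{entirely} of vertices of degree $2$ in $T$, which is precisely the (slightly stronger than usual) notion of ``bare path'' used in this paper, so you are proving exactly the right statement. The only places to be careful are the degenerate cases $n\le 2$ or $\ell=1$ (where $|W|\le 1$ and the reduction $R$ is not well-defined), but there the asserted lower bound is at most $0$ and there is nothing to prove, as you note.
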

\begin{proof}[Proof of Theorem~\ref{thm:main}]For $\Delta\in\mathbb N$ fixed, let $\alpha>0$ be sufficiently small and let $C>0$ and $d\in\mathbb N$ be large enough. Let $G$ be an $(n,d,\lambda)$-graph with $d/\lambda\ge C$ and let $T$ be an $n$-vertex tree with $\Delta(T)\le \Delta$. By Theorem~\ref{thm:manyleaves}, we can assume that $T$ has less than $\alpha n$ leaves. Then, Lemma~\ref{lemma:paths-leaves} implies that $T$ contains a collection of at least $\frac{n}{4}-2(\alpha n-2)\ge \frac{n}{8}$ vertex-disjoint bare paths, each of length $3$. Therefore, for $k=\lfloor{n/8}\rfloor$, we can find vertex-disjoint bare paths $P_i=a_ib_ic_id_i$, for $i\in [k]$. For each $i\in [k]$, we let $\tilde P_i$ denote the tree with vertex set $V(\tilde P_i)=\{a_i,b_i,c_i,d_i\}$ and edges $E(\tilde T_i)=\{a_ib_i,b_ic_i,b_id_i\}$ (see Figure~\ref{fig:tildeT}).

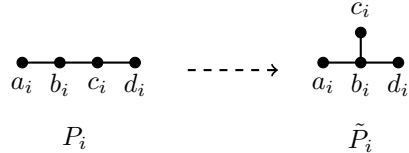
\begin{figure}[ht!]
    \centering
	\begin{tikzpicture}[scale=1]
		
		\coordinate (a) at (-2, 0);
		\coordinate (b) at (-1.5, 0);
		\coordinate (c) at (-1, 0);
		\coordinate (d) at (-.5, 0);
            \coordinate (P) at (-1.3,-.7);
    
		\draw (a) node {$a_i$};
		\draw (b) node {$b_i$};
		\draw (c) node {$c_i$};
		\draw (d)  node  {$d_i$};
            \draw (P) node {$P_i$};

		\draw (4,0)+(a) node {$a_i$};
		\draw (4,0)+(b) node {$b_i$};
		\draw (4,0)+(c) node {$d_i$};
		\draw (2.5,1)  node  {$c_i$};

            \coordinate (P') at (2.5,-.7);

            \draw (P') node {$\tilde P_i$};
    
		\draw [->,dashed, thick] (.2,.2) -- node[below] {} (1.4,.2); 
		
		\coordinate (a') at (-2,.3);
		\coordinate (b') at (-1.5,.3);
		\coordinate (c') at (-1,.3);
		\coordinate (d') at (-.5,.3);

  		\coordinate (a2) at (2,.3);
		\coordinate (b2) at (2.5,.3);
		\coordinate (c2) at (2.5,.7);
		\coordinate (d2) at (3,.3);

		\tikzstyle{every node}=[circle, draw, fill, inner sep=0pt, minimum width=4pt]
		\draw (a') node {};
		\draw (b') node {};
		\draw (c') node {};
		\draw (d') node {};

            \draw [-,thick] (a') to (b') to (c') to (d');

  		\draw (a2) node {};
		\draw (b2) node {};
		\draw (c2) node {};
		\draw (d2) node {};

            \draw [-,thick] (a2) to (b2) to (c2);
            \draw [-,thick] (b2) to (d2);

			\end{tikzpicture}
   \caption{A figure showing how we modify the bare path $P_i$.}
    \label{fig:tildeT}
\end{figure}
Let $\tilde T$ be the tree obtained from $T$ by replacing each bare path $P_i$ with the tree $\tilde P_i$. Note that $\tilde T$ has $n$ vertices, maximum degree at most $\Delta(T)\le \Delta$, and at least $k\ge \alpha n$ leaves, one from each of the $k$ bare paths we modified. Therefore, by Theorem~\ref{thm:manyleaves}, $G$ contains a copy of $\tilde T$, and thus $G^2$ contains a copy of $T$.
\end{proof}
\section{Concluding remarks}\label{section:conclusion}
In this paper, we solved a question of Krivelevich (Question~\ref{problem:krivelevich}) about whether the square of  $(n,d,\lambda)$-graphs contain spanning bounded degree trees. While doing so, we also made progress towards a question of Alon, Krivelevich, and Sudakov (Question~\ref{question:alon}), giving an affirmative answer provided the tree has linearly many leaves. Actually, in Theorem~\ref{thm:general} we can deal with trees with $\Theta(\frac{\lambda n}{d})$ leaves which, because of Lemma~\ref{lemma:paths-leaves}, implies that it only remains to solve the case when $T$ has a collection of $\Theta(\frac{\lambda n}{d})$ vertex-disjoint bare paths, each of length $\Theta(\frac{d}{\lambda})$. If $d/\lambda=\text{polylog}(n)$, then it seems plausible that one can use the absorption approach introduced by Montgomery~\cite{montgomery2019spanning} to deal with trees with $\Omega(n/\text{polylog}(n))$ bare paths, but, if $d/\lambda$ is much smaller than $\log n$, then this question seems to be out of reach at the moment. It would be quite interesting, though, to answer Question~\ref{question:alon} when $d$ is large in terms of $n$, as in the recent work of Glock, Munha Correia, and Sudakov~\cite{glock2023hamilton} for Hamilton cycles. 

\begin{question}Is it true that for any $\Delta\in\mathbb N$ and $\alpha>0$, there exists a positive constant $C$ such that if $G$ is an $(n,d,\lambda)$-graph, with $d/\lambda\ge C$ and $d\ge n^\alpha$, then $G$ contains all bounded degree spanning trees?
\end{question}
%As mentioned in the introduction, together with Montgomery and Yan~\cite{MPY2023} we  for almost spanning trees in graphs with weak expansion properties. 
%\begin{theorem}[\cite{MPY2023}]\label{thm:ramsey}For every $\Delta\in\mathbb N$, there exists a positive constant $C$ such that the following holds for all $n,m\in\mathbb N$ with $n\ge Cm$. If $G$ is an $m$-joined graph on $n+m-1$ vertices, then $G$ contains every $n$-vertex tree with maximum degree at most $\Delta$.\end{theorem}
%Note that Theorem~\ref{thm:ramsey} implies that $(n,d,\lambda)$-graphs, with $d/\lambda \ge C$ for some large constant $C$, contain all bounded degree trees on $n-\frac{\lambda n}{d}$ vertices. We expect that the techniques from~\cite{MPY2023} could be useful in order to make progress towards Question~\ref{question:alon}.
\section*{Acknowledgements}
We thank Michael Krivelevich and Richard Montgomery for helpful discussions around Question~\ref{question:alon} and the extendability method.

\bibliographystyle{abbrv}
\bibliography{spanningtrees}

\begin{thebibliography}{10}

\bibitem{alon2007embedding}
N.~Alon, M.~Krivelevich, and B.~Sudakov.
\newblock Embedding nearly-spanning bounded degree trees.
\newblock {\em Combinatorica}, 27(6):629--644, 2007.

\bibitem{alon2016probabilistic}
N.~Alon and J.~H. Spencer.
\newblock {\em The probabilistic method}.
\newblock John Wiley \& Sons, 2016.

\bibitem{Pedro}
P.~Ara\'ujo, L.~Moreira, and M.~Pavez-Sign\'e.
\newblock Ramsey goodness of trees in random graphs.
\newblock {\em Random Structures \& Algorithms}, 62(4):761--790, 2023.

\bibitem{araujo2022ramsey}
P.~Ara{\'u}jo, M.~Pavez-Sign{\'e}, and N.~Sanhueza-Matamala.
\newblock Ramsey numbers of cycles in random graphs.
\newblock {\em arXiv:2208.13028}, 2022.

\bibitem{Balogh2010}
J.~Balogh, B.~Csaba, M.~Pei, and W.~Samotij.
\newblock Large bounded degree trees in expanding graphs.
\newblock {\em The Electronic Journal of Combinatorics}, 17(1):Research Paper
  R6, 9 p., 2010.

\bibitem{dellamonicaspanning}
D.~Dellamonica~Jr.
\newblock Spanning trees of small degree.
\newblock {\em Unpublished}.

\bibitem{draganic2022rolling}
N.~Dragani{\'c}, M.~Krivelevich, and R.~Nenadov.
\newblock Rolling backwards can move you forward: on embedding problems in
  sparse expanders.
\newblock {\em Transactions of the American Mathematical Society},
  375(07):5195--5216, 2022.

\bibitem{FP1987}
J.~Friedman and N.~Pippenger.
\newblock Expanding graphs contain all small trees.
\newblock {\em Combinatorica}, 7(1):71--76, 1987.

\bibitem{Glebov2013}
R.~Glebov.
\newblock {\em On Hamilton cycles and other spanning structures}.
\newblock PhD thesis, Freie Universit\"{a}t Berlin, 2013.

\bibitem{glock2023hamilton}
S.~Glock, D.~M. Correia, and B.~Sudakov.
\newblock Hamilton cycles in pseudorandom graphs.
\newblock {\em arXiv preprint arXiv:2303.05356}, 2023.

\bibitem{han2022spanning}
J.~Han and D.~Yang.
\newblock Spanning trees in sparse expanders.
\newblock {\em arXiv:2211.04758}, 2022.

\bibitem{H2001}
P.~E. Haxell.
\newblock Tree embeddings.
\newblock {\em J. Graph Theory}, 36(3):121--130, 2001.

\bibitem{JLR2000}
S.~Janson, T.~\L~uczak, and A.~Rucinski.
\newblock {\em Random graphs}.
\newblock Wiley-Interscience Series in Discrete Mathematics and Optimization.
  Wiley-Interscience, New York, 2000.

\bibitem{KSS95}
J.~Koml\'os, G.~N. S\'ark\"ozy, and E.~Szemer\'edi.
\newblock Proof of a packing conjecture of bollob\'as.
\newblock {\em Combinatorics, Probability and Computing}, 4(3):241–255, 1995.

\bibitem{krivelevich2010embedding}
M.~Krivelevich.
\newblock Embedding spanning trees in random graphs.
\newblock {\em SIAM Journal on Discrete Mathematics}, 24(4):1495--1500, 2010.

\bibitem{krivelevich2023crowns}
M.~Krivelevich.
\newblock Crowns in pseudo-random graphs and hamilton cycles in their squares.
\newblock {\em arXiv:2305.08442}, 2023.

\bibitem{krivelevich2023turan}
M.~Krivelevich, G.~Kronenberg, and A.~Mond.
\newblock Tur{\'a}n-type problems for long cycles in random and pseudo-random
  graphs.
\newblock {\em Journal of the London Mathematical Society}, 107(4):1519--1551,
  2023.

\bibitem{KrivelevichHamilton}
M.~Krivelevich and B.~Sudakov.
\newblock Sparse pseudo-random graphs are hamiltonian.
\newblock {\em Journal of Graph Theory}, 42(1):17--33, 2003.

\bibitem{krivelevich2006pseudo}
M.~Krivelevich and B.~Sudakov.
\newblock Pseudo-random graphs.
\newblock {\em More sets, graphs and numbers: A Salute to Vera Sos and
  Andr{\'a}s Hajnal}, pages 199--262, 2006.

\bibitem{montgomery2019spanning}
R.~Montgomery.
\newblock Spanning trees in random graphs.
\newblock {\em Advances in Mathematics}, 356:106793, 2019.

\bibitem{MPY2023}
R.~Montgomery, M.~Pavez-Sign{\'e}, and J.~Yan.
\newblock Ramsey numbers of bounded degree trees versus general graphs.
\newblock {\em arXiv preprint arXiv:2310.20461}, 2023.

\end{thebibliography}
\end{document}